\documentclass{amsart}
\usepackage{amsmath,amssymb,amsfonts}
\usepackage{listings}
\usepackage{comment}
\usepackage{courier}
\usepackage{mathrsfs} 
\usepackage{enumerate}
\usepackage{color}
\usepackage{graphicx}
\usepackage[usenames,dvipsnames]{xcolor}
\usepackage{hyperref}
\newcommand{\bburl}[1]{\textcolor{blue}{\url{#1}}}
\usepackage[utf8]{inputenc}
\usepackage[T1]{fontenc}
\hypersetup{colorlinks=false,urlcolor=MidnightBlue,citecolor=PineGreen,linkcolor=BrickRed, linktoc=all}

\lstset{
	basicstyle=\small\ttfamily,
	keywordstyle=\color{blue},
	language=python,
	xleftmargin=16pt,
}

\numberwithin{equation}{section}
\textwidth=5.8in
\textheight=9in
\topmargin=-0.5in
\headheight=0in
\headsep=.5in
\hoffset=-.45in
\pagestyle{plain}

\newcommand{\E}{\mathcal{E}}

\newcommand{\Fp}{\mathbb{F}_p}
\newcommand{\Frob}{\text{Frob}}
\newcommand{\Gal}{\text{Gal}}

\newcommand{\legendre}[2]{\left(\frac{#1}{#2}\right)}
\newcommand{\nequiv}{\not\equiv}

\newcommand{\PP}{\mathbb{P}}
\newcommand{\QQ}{\mathbb{Q}}
\newcommand{\rank}{\operatorname{rank}}

\newcommand{\X}{\mathcal{X}}
\newcommand{\ZZ}{\mathbb{Z}}
\newcommand{\Z}{\mathbb{Z}}

\usepackage{amsthm}
\newtheorem{theorem}{Theorem}[section]
\newtheorem{lemma}[theorem]{Lemma}

\newtheorem{corollary}[theorem]{Corollary}
\newtheorem{remark}[theorem]{Remark}
\newtheorem{conjecture}[theorem]{Conjecture}
\newtheorem{example}[theorem]{Example}

\title{Rank and Bias in Families of Hyperelliptic Curves via Nagao's Conjecture}
\subjclass[2010]{11G05, 11G20, 11G25}
\date{\today}
\author{Trajan Hammonds, Seoyoung Kim, Benjamin Logsdon, \'Alvaro Lozano-Robledo, Steven J. Miller}

\begin{document}

\maketitle

\begin{abstract}
Let $\mathcal{X} : y^2 = f(x)$ be a hyperelliptic curve over $\QQ(T)$ of genus $g\geq 1$. Assume that the jacobian of $\mathcal{X}$ over $\mathbb{Q}(T)$ has no subvariety defined over $\mathbb{Q}$. Denote by $\mathcal{X}_t$ the specialization of $\mathcal{X}$ to an integer $T=t$, let $a_{\X_t}(p)$ be its trace of Frobenius, and $A_{\mathcal{X},r}(p) = \frac{1}{p}\sum_{t=1}^p a_{\X_t}(p)^r$ its $r$-th moment. The first moment is related to the rank of the jacobian $J_\mathcal{X}\left(\mathbb{Q}(T)\right)$ by a  generalization of a conjecture of Nagao: 
$$\lim_{X \to \infty} \frac{1}{X} \sum_{p \leq X} - A_{\mathcal{X},1}(p) \log p = \rank  J_\mathcal{X}(\mathbb{Q}(T)).$$
Generalizing a result of S. Arms, \'A. Lozano-Robledo, and S.J. Miller, we compute first moments for various families resulting in infinitely many hyperelliptic curves over $\mathbb{Q}(T)$ having jacobian of moderately large rank $4g+2$, where $g$ is the genus; by Silverman's specialization theorem, this yields hyperelliptic curves over $\mathbb{Q}$ with large rank jacobian. Note that Shioda has the best record in this directon: he constructed hyperelliptic curves of genus $g$ with jacobian of rank $4g+7$. In the case when $\mathcal{X}$ is an elliptic curve, Michel proved $p\cdot A_{\mathcal{X},2} = p^2 + O\left(p^{3/2}\right)$. For the families studied, we observe the same second moment expansion. Furthermore, we observe the largest lower order term that does not average to zero is on average negative, a bias first noted by S.J. Miller in the elliptic curve case. We prove this bias for a number of families of hyperelliptic curves.
\end{abstract}



\section{Introduction}


Given an elliptic curve $E/\QQ: y^2 = x^3 + Ax + B$ with $A$ and $B$ integers, the Mordell--Weil theorem shows that the set of rational solutions $E(\mathbb{Q})$ forms a finitely generated abelian group. Mazur \cite{Maz1,Maz2} proved there are only fifteen possibilities for the torsion subgroup, all of which occur for infinitely many non-isomorphic elliptic curves over $\QQ$.  However, much less is known about the rank. Recent breakthroughs, such as \cite{Bh, BhSh1, BhSh2}, have shown that the average rank among all elliptic curves over $\QQ$ is bounded by $7/6$, and that a positive percentage of curves are rank $0$, but it is still unknown if the rank is unbounded as we vary over all curves. The largest known rank is at least 28, due to Noam Elkies \cite{E}, and some recent models (see \cite{PPVW}) suggest that the rank may in fact be bounded (interestingly, their prediction for the largest rank is very close to the largest observed!).

If the Birch and Swinnerton-Dyer conjecture \cite{BSD1, BSD2} holds, then the order of vanishing of the Hasse--Weil $L$-function $L(E/\QQ,s)$ at the central point (the analytic rank) equals the number of generators of the Mordell--Weil group (the algebraic rank). Conjecturally, however, there are other relations between the traces of Frobenius at each prime $p$ and the algebraic rank. Nagao posited that the first moment sums in a one-parameter family of elliptic curves determine the rank of the elliptic surface. More concretely, let \begin{equation} \mathcal{E}: y^2 \ = \ x^2 + A(T)x + B(T) \end{equation} be an elliptic curve over $\mathbb{Q}(T)$ and, for an integer $t\in\ZZ$, let \begin{equation} a_{\mathcal{E}_t}(p) \ = \ p + 1 - \# \mathcal{E}_t(\Fp),  \end{equation} where $\# \mathcal{E}_t(\Fp)$ is the number of points over $\Fp$ on the specialization of $\mathcal{E}$ at $T=t$. Also, for each $r\geq 1$, we define the $r$\textsuperscript{th} moment of the traces of Frobenius by \begin{equation} A_{\mathcal{E},r}(p) \ = \  \frac1{p} \sum_{t=0}^{p-1} a_{\mathcal{E}_t}(p)^r. \end{equation} Then, Nagao \cite{Na3} conjectured that \begin{equation} \lim_{P \to \infty} \frac{1}{P} \sum_{p \leq P} -A_{\mathcal{E},1}(p) \log p\ =\ {\rm rank} \ \mathcal{E}(\mathbb{Q}(T)). \end{equation} Rosen and Silverman \cite{RS} have proved that Nagao's conjecture holds for surfaces where Tate's conjecture holds, which includes rational surfaces\footnote{The elliptic surface is rational iff one of the following is true: (1) $0 < \max\{3 \mbox{deg} A, 2\mbox{deg} B\} < 12;$ (2) $3\mbox{deg} A = 2\mbox{deg} B = 12$ and ${\rm ord}_{t=0}t^{12} \Delta(t^{-1}) = 0$. See \cite{RS}, pages $46-47$ for more details.}. The second author \cite{kim} has shown that the Sato-Tate conjecture  implies Nagao's conjecture for certain twist families of elliptic curves, and also in a generalized form for hyperelliptic curves, that we shall describe below. 

There has been a long history of attempts to construct either individual elliptic curves with large rank, or families with large rank. Many of these were found by looking for curves where these associated sums are large and then analyzing these curves carefully; however, the work of Nagao, Rosen and Silverman presents another approach. If one can find a family of elliptic curves so that the sum $A_{\mathcal{E},1}(p)$ is computable, then we would conjecturally have computed (unconditionally if Tate's conjecture is true for the surface) the rank of the family. Thus, the challenge is to find choices of $A(T)$ and $B(T)$ so that the resulting moments $A_{\mathcal{E},1}(p)$ can be computed and are negative, and large in absolute value. This was first done by Arms, Lozano-Robledo and Miller \cite{ALM, Mil1, Mil2}, where elliptic curves over $\QQ(T)$ of rank up to 8 were constructed\footnote{Up to rank 6 the constructions gave rational surfaces and the results were unconditional; for the larger rank the surfaces were not rational, but one can isolate candidate points from the method, and then directly show that these are linearly independent. The rank 8 case would correspond in this setting to rank 4g+4, but we do not pursue this in this paper.}, and then generalized in \cite{MMRSY} to function fields over number fields. In this paper, we are interested in constructing jacobians of hyperelliptic curves over $\QQ$ with high rank, conditional on the following generalization of Nagao's conjecture. 

\begin{conjecture}
	\label{conj-nagao-new}
	Let $\X$ be a hyperelliptic curve defined over $\QQ(T)$. Assume that the jacobian of $\mathcal{X}$ over $\mathbb{Q}(T)$ has no subvariety defined over $\mathbb{Q}$. For an integer $t\in\ZZ$, let $\#\X_t(\Fp)$ be the number of points over $\Fp$ on the specialization of $\X$ at $T=t$, and for each prime $p\geq 2$, we define $a_{\X_t}(p)=p+1-\#\X_t(\Fp)$, and $A_{\X,1}(p)= \frac1{p} \sum_{t=0}^{p-1} a_{\X_t}(p).$ Then, we have 
	\begin{equation} \lim_{P \to \infty} \frac{1}{P} \sum_{p \leq P} -A_{\X,1}(p) \log p\ =\ {\rm rank} \ J_\X(\mathbb{Q}(T)). \end{equation} 
\end{conjecture}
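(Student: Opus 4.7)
The plan is to adapt the Rosen--Silverman proof of Nagao's conjecture for elliptic surfaces to the hyperelliptic setting, encoding $\rank J_\X(\QQ(T))$ as the order of a pole of an $L$-function attached to the fibration and then extracting it from the averaged first moments via a Tauberian argument. First I would pass to a smooth projective model $\pi : \widetilde{\X} \to \PP^1_\QQ$ of the fibration; the Shioda--Tate formula on this surface gives
$$\rho\bigl(\widetilde{\X}_{\overline{\QQ}}\bigr) \ = \ 2 + \sum_{v}(m_v - 1) + \rank J_\X\bigl(\overline{\QQ}(T)\bigr),$$
where $v$ ranges over points of $\PP^1$ with reducible fibers and $m_v$ is the number of components there. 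The hypothesis that the jacobian has no subvariety defined over $\QQ$ ensures that $J_\X\bigl(\overline{\QQ}(T)\bigr)^{\Gal(\overline{\QQ}/\QQ)} = J_\X(\QQ(T))$, so the Galois-invariant part of the Mordell--Weil rank coincides with the quantity named in the conjecture and no spurious contribution from a constant abelian subvariety is present.

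Next, I would invoke Tate's conjecture for $\widetilde{\X}$ at $s=2$: the order of the pole of $L\bigl(H^2(\widetilde{\X}),s\bigr)$ at $s=2$ should equal $\rho(\widetilde{\X}_{\overline{\QQ}})$. Via the Leray spectral sequence for $\pi$, this $L$-function factors into contributions from $H^2(\PP^1,\QQ_\ell)$, from $H^0(\PP^1, R^2\pi_*\QQ_\ell)$, and from the middle piece $H^1(\PP^1, R^1\pi_*\QQ_\ell)$. The first two pieces precisely account for the summand $2 + \sum_v(m_v-1)$ in Shioda--Tate, so the remaining pole order of the middle factor matches $\rank J_\X(\QQ(T))$.

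The third step is the explicit formula. At primes $p$ of good reduction, the Grothendieck--Lefschetz trace formula yields
$$\sum_{t=0}^{p-1} a_{\X_t}(p) \ = \ -\tr\bigl(\Frob_p \mid H^1_c\bigl(\PP^1_{\overline{\Fp}}, R^1\pi_*\QQ_\ell\bigr)\bigr) + E(p),$$
with $E(p)$ a bounded error term coming from fibers at $\infty$ and from bad reduction. Thus $-\sum_{p \le X} A_{\X,1}(p)\log p$ is, up to controlled error, the Dirichlet partial sum of $-L'/L$ of the middle $L$-function, and a Wiener--Ikehara style Tauberian theorem converts the pole order identified above into the claimed Ces\`aro asymptotic.

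The hard part will be Tate's conjecture at $s=2$, which remains open outside limited classes (rational surfaces, K3 surfaces, abelian surfaces, and certain products). For fibrations whose total surface $\widetilde{\X}$ belongs to a class in which Tate is known, the strategy yields the conjecture unconditionally, in direct analogy with Rosen--Silverman; otherwise only a conditional statement results. A secondary technical obstacle is the uniform control of $E(p)$ and of the bad Euler factors required for the Tauberian step, and it is precisely the assumption that $J_\X$ contains no constant $\QQ$-subvariety that removes the trivial pole at $s=1$ of the relevant $L$-function and makes that step legitimate.
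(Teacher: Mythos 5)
The statement you were asked about is a \emph{conjecture}, and the paper does not prove it; it is stated precisely because no proof is known, and the paper's results are explicitly conditional on it. What the paper does offer (Section \ref{sec-HP}) is the Hindry--Pacheco framework: their Theorem 1.3 says that if the surface satisfies Tate's conjecture then the rank equals a residue at $s=1$ of the Dirichlet series built from $-A^\ast_\X(p)$, and that if in addition $L_2(\X/\QQ,s)$ continues to and does not vanish on $\Re(s)=2$, then the Ces\`aro limit in the conjecture follows. Your outline is, in essence, a faithful reconstruction of exactly that conditional argument --- Shioda--Tate, Tate's conjecture at $s=2$, the Leray decomposition of $H^2$, Grothendieck--Lefschetz on the middle piece, and a Tauberian step --- so as a description of \emph{why one believes} the conjecture and of the known reduction, it is on target and matches the route the paper cites.

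But it is not a proof, and you should be explicit that it cannot be completed with current knowledge. Two inputs are genuinely open for the surfaces in question: (i) Tate's conjecture for $\widetilde{\X}$ (known for rational surfaces, which is why the $g=1$ constructions of \cite{ALM} are unconditional, but not for the higher-genus fibrations built here), and (ii) the analytic continuation and non-vanishing of $L_2(\X/\QQ,s)$ on $\Re(s)=2$, without which the residue statement does not upgrade to the Ces\`aro limit --- a Wiener--Ikehara argument needs boundary control, not just the pole order. You flag (i) but understate (ii). There is also a conceptual slip: the hypothesis that $J_\X$ has no $\QQ$-subvariety is not about forcing $J_\X(\overline{\QQ}(T))^{\Gal(\overline{\QQ}/\QQ)} = J_\X(\QQ(T))$ (that is automatic by Galois descent of points); its role is to make the Chow $\QQ(T)/\QQ$-trace $(\tau,B)$ trivial, so that the Lang--N\'eron quotient $J_\X(\QQ(T))/\tau(B(\QQ))$ is the full Mordell--Weil group and the correction term $a_B(p)$ in the Hindry--Pacheco average $A^\ast_\X(p)=A_{\X,1}(p)-a_B(p)$ vanishes. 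Finally, your use of the geometric Shioda--Tate formula over $\overline{\QQ}$ needs to be replaced by its Galois-equivariant version, since Tate's conjecture over $\QQ$ controls $\rank \NS(\widetilde{\X}_{\overline{\QQ}})^{G_\QQ}$, not the full geometric Picard number.
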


We recall here that the Chow $\QQ(T)/\QQ$ trace of an abelian variety $A/\QQ(T)$ is a pair $(\tau,B/\QQ)$, where $B$ is an abelian variety over $\QQ$, and $\tau\colon B\to A$ is a homomorphism of abelian varieties defined over $\QQ(T)$, with the universal mapping property that, given any such pair $(\tau',B')$, the map $\tau' \colon B'\to A$ should factor through $\tau$ (see \cite{C} for more details on Chow traces). In particular, we note that if $A/\QQ(T)$ has no subvariety defined over $\QQ$, then its Chow trace is necessarily trivial.

Hindry and Pacheco \cite{HP} have given a more general version of Nagao's conjecture for a projective surface with a fibration onto a curve. We shall discuss below the relation between their conjecture and Conjecture \ref{conj-nagao-new}.

In our main theorem, and for each fixed genus $g\geq 1$, we construct a hyperelliptic curve $\X: y^2= f_g(x,T)$ such that Nagao's limit is computable, and if we assume Conjecture \ref{conj-nagao-new}, then its jacobian has rank $4g+2$. We note that preliminary data of hyperelliptic curves ordered by discriminant up to size $10^6$ and $10^7$, due to Sutherland \cite{Su}, show that $95.68\%$ of genus $2$ curves (resp. $92.52\%$ of genus $3$ curves) have analytic rank $0$, $1$, or $2$ (resp. $0$,$1$,$2$, or $3$). Thus, a rank of $10$ (resp. $14$) in a hyperelliptic jacobian of genus $2$ (resp. genus $3$) is well above the average rank one would expect.

\begin{theorem}
	\label{main1}
	Let $g\geq 1$ be fixed, and assume that the jacobian of $\mathcal X$ over $\QQ(T)$ has no subvariety defined over $\QQ$ in its factorization. Then, Conjecture \ref{conj-nagao-new} implies that the jacobian of $\X$ has rank $4g+2$ over $\mathbb{Q}(T)$.
\end{theorem}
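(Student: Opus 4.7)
The plan is to reduce everything to an explicit character-sum evaluation of the first moment $A_{\X,1}(p)$ and then invoke Conjecture \ref{conj-nagao-new}. Since $\X_t \colon y^2 = f_g(x,t)$ is hyperelliptic, the point-count formula over $\Fp$ gives
\begin{equation*}
a_{\X_t}(p) \;=\; -\sum_{x \in \Fp} \legendre{f_g(x,t)}{p} \;+\; O(1),
\end{equation*}
where the $O(1)$ accounts for points at infinity and ramification. Summing over $t\in\Fp$ and interchanging the order of summation yields
\begin{equation*}
-A_{\X,1}(p) \;=\; \frac{1}{p}\sum_{x\in\Fp}\sum_{t\in\Fp} \legendre{f_g(x,t)}{p} \;+\; O(1).
\end{equation*}
The key design choice, generalizing the trick of Arms, Lozano-Robledo, and Miller, is to pick $f_g(x,T)$ so that it is \emph{linear} (or at worst low-degree) in $T$. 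Then the inner sum over $t$ is either $(p-1)\legendre{\ast}{p}$ or $0$ according to whether the $T$-coefficient vanishes at $x$, allowing an exact evaluation.

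The next step is to arrange $f_g(x,T)$ so that the resulting outer sum in $x$ contributes exactly $4g+2$ terms which are $p$ (rather than $O(\sqrt p)$ noise). Writing $f_g(x,T) = A(x)T + B(x)$, the contribution splits along the zero set of $A$. I would choose $A(x)$ to have $4g+2$ simple roots $\alpha_1,\ldots,\alpha_{4g+2}$ in $\QQ$ with the property that $B(\alpha_i)$ is a nonzero square in $\QQ$; this guarantees that for each sufficiently large prime $p$ the term $\legendre{B(\alpha_i)}{p} = 1$, giving the identity
\begin{equation*}
-A_{\X,1}(p) \;=\; (4g+2) \;+\; \frac{E(p)}{p}
\end{equation*}
for an explicit error $E(p)$ coming from (i) the $x$ with $A(x)\neq 0$ (a Jacobi-type sum bounded by the Weil estimates, hence $O(\sqrt p)$), and (ii) finitely many local corrections at primes of bad reduction or primes dividing $A,B$ discriminants. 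One must also check that the degree of $f_g$ in $x$ is $2g+1$ or $2g+2$ so that the curve is genuinely of genus $g$, and that the jacobian has no $\QQ$-subvariety in its decomposition (a genericity condition on the coefficients).

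Putting these estimates together,
\begin{equation*}
\frac{1}{X}\sum_{p\leq X} -A_{\X,1}(p)\log p \;=\; (4g+2)\cdot\frac{1}{X}\sum_{p\leq X}\log p \;+\; O\!\left(\frac{1}{X}\sum_{p\leq X}\frac{\log p}{\sqrt p}\right) \;+\; o(1),
\end{equation*}
and by the prime number theorem the main term tends to $4g+2$ while the error tends to $0$. Conjecture \ref{conj-nagao-new} then identifies this limit with $\rank J_\X(\QQ(T))$, completing the argument.

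The main obstacle, and what distinguishes the hyperelliptic case from the elliptic one, is exhibiting an explicit $f_g(x,T)$ of the correct genus for which (a) the coefficient structure in $T$ is simple enough to carry out the summation exactly, (b) one obtains precisely $4g+2$ Legendre-symbol contributions equal to $+1$ (rather than some mixture of $\pm 1$ that would lower the rank count), and (c) the jacobian has no $\QQ$-subvariety so that Conjecture \ref{conj-nagao-new} applies. Condition (c) will likely require a separate irreducibility or simplicity argument on the specialized curves, perhaps via reduction modulo a well-chosen prime, and is the most delicate part to verify; the character-sum bookkeeping is routine once $f_g$ is fixed.
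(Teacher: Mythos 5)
There is a genuine gap, and it sits exactly where you flagged ``the main obstacle'': your ansatz $f_g(x,T)=A(x)T+B(x)$, linear in $T$, cannot produce rank $4g+2$ for a genus-$g$ curve. In your setup the first moment localizes on the zero set of $A(x)$, so to get a main term of $(4g+2)p$ you need $A(x)$ to have $4g+2$ distinct roots, forcing $\deg_x f_g\geq 4g+2$; but a hyperelliptic curve $y^2=f_g(x,t)$ of genus $g$ requires $\deg_x f_g\in\{2g+1,2g+2\}$. These two requirements are incompatible, and indeed the linear-in-$T$ construction is exactly what the paper uses in Section \ref{sec-rank2gplus1} to get rank only $2g+1$ (there $A(x)=f(x)$ has $2g+1$ roots and $B=1$ is a square). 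Your ``low-degree in $T$'' hedge does not rescue the argument unless you say which degree and how the inner $t$-sum is then evaluated.

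The missing idea is the one from \cite{ALM} that the paper generalizes: take $f$ \emph{quadratic} in $T$, namely $\X\colon y^2=x^{2g+1}T^2+2q(x)T-h(x)$ with $\deg q=\deg h=2g+1$, so the curve still has genus $g$. By Lemma \ref{AMLRlemma} the inner sum over $t$ is $(p-1)\legendre{x^{2g+1}}{p}$ or $-\legendre{x^{2g+1}}{p}$ according to whether $p$ divides the discriminant $D_T(x)=q(x)^2+x^{2g+1}h(x)$, which has degree $4g+2$ in $x$ even though $f$ has degree only $2g+1$; this doubling is precisely what lets one reach $4g+2$. One then gets $-p\cdot A_{\X,1}(p)=p\sum_{D_T(x)\equiv 0}\legendre{x}{p}$ exactly (no Weil bounds or $O(\sqrt p)$ error terms are needed --- the off-diagonal $x$-sum vanishes identically by Lemma \ref{lem-addstozero}), and the remaining work (Lemma \ref{lem-choosehq}) is to solve for $q,h\in\Z[x]$ so that $D_T(x)=A\prod_{i=1}^{4g+2}(x-\rho_i^2)$ has $4g+2$ distinct perfect-square integer roots, ensuring every surviving Legendre symbol equals $+1$. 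Your conditions (b) (all contributions $+1$) and (c) (trivial Chow trace, handled in the paper via Zarhin's theorem when the Galois group is generic) are correctly identified, but without the quadratic-in-$T$ discriminant mechanism the construction you describe cannot reach the claimed rank.
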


Each specialization of $T$ to an integer $t$ gives a hyperelliptic curve $\mathcal{X}_{t}$ over $\mathbb{Q}$ of genus $g$. By the specialization theorem of N\'eron, Silverman, and Tate we produce examples of hyperelliptic jacobians over $\QQ$ with moderate rank.
\begin{corollary}
	Let $g\geq 1$ be fixed, and assume the conditions from Theorem \ref{main1}. Then, there are infinitely many hyperelliptic curves over $\mathbb{Q}$ of genus $g$ with rank at least $4g+2$.
\end{corollary}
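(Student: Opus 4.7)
The plan is to combine Theorem~\ref{main1} with the specialization theorem of N\'eron, Silverman, and Tate for abelian varieties over a one-variable function field. By Theorem~\ref{main1}, under the Chow-trace hypothesis the jacobian $J_\X$ has rank $4g+2$ over $\QQ(T)$. Silverman's specialization theorem for abelian varieties then guarantees that, outside a thin set of values $t \in \ZZ$, the specialization homomorphism $J_\X(\QQ(T)) \to J_{\X_t}(\QQ)$ is injective. In particular, for all but a thin set of integers $t$, one has $\rank J_{\X_t}(\QQ) \geq 4g+2$.

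To conclude that this yields infinitely many non-isomorphic hyperelliptic curves of genus $g$ over $\QQ$, I would argue that the family $\X/\QQ(T)$ is non-isotrivial. The hypothesis that $J_\X$ has no subvariety defined over $\QQ$ (equivalently, trivial Chow $\QQ(T)/\QQ$-trace) rules out $J_\X$ being isogenous to a constant abelian variety, so the classifying map $t \mapsto [\X_t] \in \mathcal{M}_g(\overline{\QQ})$ is non-constant. Since $\mathcal{M}_g$ is quasi-projective, each $\overline{\QQ}$-isomorphism class occurs for at most finitely many $t \in \ZZ$, and even after removing the thin bad set of specializations we still obtain infinitely many pairwise non-isomorphic hyperelliptic curves $\X_t/\QQ$ of genus $g$, each satisfying $\rank J_{\X_t}(\QQ) \geq 4g+2$.

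The step I expect to require the most care is the non-isotriviality claim for arbitrary $g$. For the explicit families constructed in the body of the paper, one can verify non-isotriviality by exhibiting a moduli-type invariant (for instance a ratio of Igusa invariants when $g=2$, or more generally a suitable invariant built from the coefficients of $f_g(x,T)$) which depends non-trivially on $T$. In general one reduces to a standard moduli-theoretic argument, using the Chow-trace hypothesis to prevent a constant jacobian and hence a constant moduli point. With the rank computation from Theorem~\ref{main1} and this non-isotriviality input in hand, the corollary follows at once.
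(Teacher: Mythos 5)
Your main line of argument is exactly the paper's: the paper's entire justification for the corollary is the single sentence preceding it, invoking the N\'eron--Silverman--Tate specialization theorem to transfer the generic rank $4g+2$ of Theorem \ref{main1} to all but finitely many integer specializations (the exceptional set has bounded height, hence is finite in $\PP^1(\QQ)$, which is a bit stronger than the ``thin set'' you allow). So on that score you match the paper.

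You go beyond the paper in trying to justify that the specializations really give \emph{infinitely many} distinct curves, which is a fair point to raise since the paper is silent on it. But the implication you use there is false as stated: triviality of the Chow $\QQ(T)/\QQ$-trace does \emph{not} force the classifying map $t\mapsto[\X_t]$ to be non-constant. A family of quadratic twists $Ty^2=f(x)$ has trivial $\QQ(T)/\QQ$-trace (one checks there are no nonzero $\QQ(T)$-homomorphisms from a constant abelian variety into the nontrivial twist, since the twisting cocycle forces any such map to equal its own negative), yet it is isotrivial with constant moduli point. What actually rules out isotriviality is triviality of the $\overline{\QQ}(T)/\overline{\QQ}$-trace, or a direct verification that some invariant of $\X_t$ varies with $t$ (for the explicit families of Section \ref{sec-rank4gplus2} this is easy, e.g.\ from the cross-ratios of the roots of $f_g(x,t)$, or Igusa invariants when $g=2$). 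Relative to the paper's own one-line proof this is a refinement rather than a gap, but the Chow-trace step of your non-isotriviality argument needs to be repaired before it can be counted as a proof of that refinement.
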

We remark here that Shioda \cite{Sh} has produced examples of hyperelliptic jacobians over $\QQ$ with rank $4g+7$. Our approach, however, is different from that of Shioda because we do not need to exhibit points in the jacobian $J_\X$ in order to (conjecturally) deduce its rank. Note also when $g=1$ we recover the result in \cite{ALM} and, in that case, the result is unconditional since Nagao's conjecture is known for rational surfaces.

\begin{example}\label{ex-intro}
	When $g=2$, our construction (see Sections \ref{sec-rank4gplus2} and \ref{sec-ex}) yields, for instance, the following hyperelliptic curve over $\QQ(T)$ of genus $2$, with trivial Chow trace (see below), and conjectural rank $10$:
	\begin{align*} 
 \X : y^2 &=	62476467927496043633049600000000x^5T^2 \\
 &+
	124952935854992087266099200000000x^5T \\
	& -
	3290807860845345873174084414821262950400000000x^5 \\
	& -
	78077124456852074329904550163688002129920000x^4T \\
	& +
	1266882949301025362537844681132821271997870080000x^4 \\
	& -
	123371083167607662332725955346616811520000000x^3T \\
	& +
	24393131657917882942419531475439645795721984020559648121x^3 \\
	& +
	97780947791238642428587970982523699200000000x^2T \\ 
	& +
	77106121667148850964656956255833136751214529427393152000x^2\\
	& -
	2549993916103702826374130630551142400000000000xT \\
	& -
	1078851918243051493072239063454153306319585738833920000x \\
	& +
	3290807860845408349642011910864896000000000000T \\
	& +
	1524014810925296267945145551729277974339657041182720000000.
	\end{align*} 
	When evaluating at $T=1$, we obtain a hyperelliptic curve over $\QQ$ of genus $2$ and rank $\geq 10$:
	\begin{align*}
		C: y^2 = f(x) =&  -3290807860845158443770301926690363801600000000x^5\\
		&  + 1266804872176568510463514776582657583995740160000x^4\\
		&  +
		24393131657794511859251923813106919840375367209039648121x^3\\
		&  +
		77106121667246631912448194898261724722197053126593152000x^2 \\
		& -
		1078851920793045409175941889828283936870728138833920000x\\
		&  +
		1524014810928587075805990960078919986250521937182720000000.
	\end{align*}
	First, we note that the quintic polynomial $f(x)=f(x,1)$ has Galois group $S_5$ (verified using Magma). Hence, the polynomial $f(x,T)$ must have generic Galois group $S_5$ as well. A theorem of Zarhin (Theorem \ref{thm-zarhin}) now shows that $J_\X(\QQ(T))$ must be an absolutely simple abelian variety. In particular, its Chow trace must be trivial.
	
In order to verify that the rank of the jacobian $J_C/\QQ$ of $C$ is $\geq 10$, we have found $10$ rational points $P_{\X,i}$ on  $\X(\QQ(T))$, given by:
\begin{align*}
   P_{\X,1} &= (1 , 7904205711360000T + 40304282819518226626505913739 ),\\
P_{\X,2} &=(4 , 252934582763520000T - 65685295896309228373784754088),\\
P_{\X,3} &=(9 , 1920721987860480000T - 159820218042093846301934185047),\\
P_{\X,4} &=(16 , 8093906648432640000T - 348080834252094356418710160704),\\
P_{\X,5} &=(25 , 24700642848000000000T - 656377570038206536713751374625),\\
P_{\X,6} &=(36 , 61463103611535360000T - 1113329506187909618252508872376),\\
P_{\X,7} &=(49 , 132845985390827520000T - 1748266818700297788803284771523),\\
P_{\X,8} &=(64 , 259005012749844480000T - 2590716593774949584327684741632),\\
P_{\X,9} &=(81 , 466735443050096640000T - 3670270185462998197299916564269),\\
P_{\X,10} &=(100 , 790420571136000000000T - 5016542911649893790058312437000).
\end{align*}
We evaluated $P_{\X,i}$ at $T=1$ to obtain points $P_i$ on $C(\QQ)$, and finally we defined points $(P_i)-(\mathcal{O})$ on the jacobian $J_C$, where $\mathcal{O}$ is the unique point at infinity. Here are the points:
\small 
\begin{align*} P_1 &= (1, 40304282819526130832217273739), P_2 = (4, 65685295896056293791021234088),\\
	P_3 &=  (9 , 159820218040173124314073705047), P_4 = (16 , 348080834244000449770277520704),\\
	P_5&= (25 : 656377570013505893865751374625 : 1),P_6= (36 : 1113329506126446514640973512376 : 1),\\
	P_7&=  (49 : 1748266818567451803412457251523 : 1),
	P_8= (64 : 2590716593515944571577840261632 : 1),\\
	P_9&=  (81 : 3670270184996262754249819924269 : 1),
	P_{10}= (100 : 5016542910859473218922312437000 : 1).
\end{align*}
\normalsize

Further, we have computed the canonical height matrix for these $10$ points, and its determinant, which equals $1131062371638072163.8139\ldots\neq 0$.  Thus, the points $\{(P_i)-(\mathcal{O})\}$ are independent and the rank of $J_C(\QQ)$ is at least $10$. Further, since these points come from evaluating points on $J_\X(\QQ(T))$, we conclude that the points $\{(P_{\X,i})-(\mathcal{O})\}$ on $J_\X$ must be independent also, and so the rank of $J_\X$ must be $\geq 10$ as well, in agreement with the generalized Nagao's Conjecture \ref{conj-nagao-new}.  
\end{example}
\normalsize

\subsection{Results on higher moments}

So far we have just focused on the first moments; however, the second moments are also interesting and play a key role in several problems. For one-parameter families of elliptic curves, Michel \cite{Mi} proved that if $j(T)$ is non-constant then \begin{equation} p A_{\mathcal{E},2}(p)  \ = \ p^2 + O\left(p^{3/2}\right) \end{equation} (we have multiplied the second moment by $p$ to match the quantity he studied); there are cohomological interpretations of the lower order terms, and Miller \cite{Mil3} showed that the bound is sharp by exhibiting a family with a term of that size.
While early investigations of the second moment was for the purpose of bounding the average rank in families, these are also key ingredients in understanding the behavior of zeros of the $L$-functions near the central point. According to the Katz-Sarnak theory \cite{KS1,KS2}, in the limit as the conductors tend to infinity the behavior of zeros near the central point behave similarly with the scaling limit of a subgroup of unitary matrices as their sizes tend to infinity (this is true for far more than just families of elliptic curves; see for example the survey article \cite{MMRTW}).

Interestingly, the main terms for the $n$-level densities of the low-lying zeros of families of elliptic curves depend only very weakly on the different values of the moments $A_{\mathcal{E},r}(p)$. The first two moments contribute to the main term; if the first moment is $k$ then there is a contribution equal to what one would expect if there were $k$ zeros at the central point, providing evidence in support of the Birch and Swinnerton-Dyer Conjecture (for more on this see \cite{GM}). The second moment's universality (of $p^2$ for families with non-constant $j(T)$, and $2p^2$ half the time and $0$ half the time for $j(T)$ constant) is similar to the universality of the second moments of Satake parameters in the work of Rudnick and Sarnak \cite{RuSa}, which was responsible for the universal behavior in the $n$-level correlations. If $r \ge 3$ then these terms never contribute to the main term. It is also similar to the Central Limit Theorem, where the universality is due to our ability to standardize any nice density to have mean zero and variance one (i.e., fix the first two moments), and the higher moments only surface in controlling the rate of convergence.

While the main term in the behavior of low-lying zeros is independent of the finer properties of the arithmetic of the curve, the higher moments ($r \ge 3$) and the lower order terms in the first and second moments \emph{are} observable in lower order corrections (i.e., in the rate of convergence). In particular, the lower order terms in the second moment have applications towards understanding the observed excess rank in many families of elliptic curves (see \cite{Mil3}), while the higher moments allow us to distinguish different families of elliptic curves in fine properties of the behavior of zeros near the central point (see \cite{Mil4}).

In this paper we concentrate on the first two moments. As has been remarked above, the first moment can be used to construct families with rank. The second moment is related to finer questions about the distribution of zeros. Interestingly, in all families of elliptic curves investigated to date, the first lower order term in the second moment $A_{\mathcal{E},2}(p)$ which does not average to zero always averages to a negative value; see \cite{A--,MMRW} for results for families of elliptic curves, as well as generalizations to other families of $L$-functions.  In our study of families of hyperelliptic curves, we observe this same bias. 

Our results are as follows. Let $g\geq 1$ be arbitrary, put $n=2g+1$, and let \(0 \leq k < n\). We shall consider the family \(\X_{n,h,k} : y^2 = x^n + x^h T^k\). We show the following (the proof can be found in Section \ref{sec-second}):

\begin{theorem}\label{thm-main-bias}
	Suppose \(\gcd(k,n-h,p-1) = 1\).
	Then
	\begin{align}
	p\cdot A_{\X_{n,h,1},2}(p)
	& = 
	\begin{cases}
	(\gcd(p-1,n-h) - 1)(p^2-p) & \text{ if } h ~ \textnormal{even},\\
	\gcd(n-h,p-1) (p^2-p)      & \text{ if } h ~ \textnormal{odd and} ~ \nu_2(p-1) > \nu_2(n-h),\\
	0                          & \textnormal{ otherwise}.
	\end{cases}
	\end{align}
\end{theorem}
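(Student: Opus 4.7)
The plan is to open the square as a double character sum, swap orders of summation, and reduce everything to a character sum over a subgroup of roots of unity in $\Fp^*$. Writing $f_t(x) = x^n + x^h t$, we have $a_{\X_t}(p) = -\sum_{x \in \Fp} \legendre{f_t(x)}{p}$ (with the convention $\legendre{0}{p} = 0$), so
\[
p \cdot A_{\X_{n,h,1},2}(p) \ = \ \sum_{x, y \in \Fp} \legendre{x^h y^h}{p} \sum_{t \in \Fp} \legendre{(x^m + t)(y^m + t)}{p},
\]
where $m = n - h$, after factoring $x^h y^h$ out of each quadratic factor.

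Next I would evaluate the inner sum in $t$. The polynomial $(t + x^m)(t + y^m)$ is monic of degree $2$ in $t$ with discriminant $(x^m - y^m)^2$, so the standard identity for Legendre-symbol sums over quadratics gives $p - 1$ when $x^m = y^m$ and $-1$ otherwise. Substituting leaves
\[
p \cdot A_{\X_{n,h,1},2}(p) \ = \ p\,T_1 - T_2, \qquad T_1 = \sum_{x^m = y^m} \legendre{(xy)^h}{p}, \qquad T_2 = \sum_{x, y} \legendre{(xy)^h}{p}.
\]
The sum $T_2$ factors as $\bigl(\sum_x \legendre{x^h}{p}\bigr)^2$, which evaluates to $p^2$ if $h = 0$, $(p-1)^2$ if $h \geq 1$ is even, and $0$ if $h$ is odd. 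For $T_1$, fix $x \in \Fp^*$: since $\Fp^*$ is cyclic, the solutions of $y^m = x^m$ form the coset $x\mu_d$ with $d = \gcd(m, p-1)$, and because $x^{2h}$ is already a square, each pair contributes $\legendre{\zeta^h}{p}$ for $\zeta \in \mu_d$. Hence $T_1 = (p-1)\,\sigma_h$ (plus a single boundary pair in the edge case $h = 0$), where $\sigma_h := \sum_{\zeta \in \mu_d} \legendre{\zeta^h}{p}$.

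The core computation is evaluating $\sigma_h$. If $h$ is even then $\zeta^h$ is always a square and $\sigma_h = d$ trivially. If $h$ is odd then $\legendre{\zeta^h}{p} = \legendre{\zeta}{p}$, so $\sigma_h$ is the sum of the quadratic character $\chi$ of $\Fp^*$ restricted to the subgroup $\mu_d$. By orthogonality this sum is $d$ when $\chi|_{\mu_d}$ is trivial and $0$ otherwise; triviality is equivalent to $\mu_d \subseteq (\Fp^*)^2$, i.e.\ to $d \mid (p-1)/2$. Since $\nu_2(d) = \min(\nu_2(n-h), \nu_2(p-1))$, a short 2-adic check shows this is in turn equivalent to $\nu_2(p-1) > \nu_2(n-h)$. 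Substituting the resulting values of $T_1$ and $T_2$ into $p\,T_1 - T_2$ yields the three cases of the theorem.

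I expect the principal obstacle to be the bookkeeping around the degenerate pairs with $x = 0$ or $y = 0$, whose contribution differs between $h = 0$ and $h \geq 1$ and must be tracked carefully so that the $+1$ in the $h=0$ case combines with the rest to produce exactly $(d-1)(p^2-p)$. The central number-theoretic step, identifying when a quadratic character is trivial on a cyclic subgroup, is routine once the reduction to $\sigma_h$ is in place; the trivial hypothesis $\gcd(k, n-h, p-1) = 1$ (vacuous for $k=1$) is needed only to rule out extra degeneracy in the generalization to larger $k$.
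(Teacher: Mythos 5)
Your proposal follows essentially the same route as the paper's proof: open the square, factor $x^hy^h$ out of each Legendre symbol, evaluate the inner sum over $t$ by the standard quadratic identity (Lemma \ref{AMLRlemma}) to obtain $p-1$ on the diagonal $x^{n-h}\equiv y^{n-h}\bmod p$ and $-1$ off it, and then reduce to a diagonal count. The one genuine difference is the odd-$h$ case: where the paper's Lemma \ref{double_sums} passes to the additive group $\Z/(p-1)\Z$ and computes $\left|\left\langle \frac{p-1}{\gcd(n-h,p-1)}\right\rangle\cap\langle 2\rangle\right|$ by hand, you restrict the quadratic character $\chi$ to the subgroup $\mu_d$ of $(n-h)$-th roots of unity ($d=\gcd(n-h,p-1)$) and invoke orthogonality, reading off that the sum is $d$ or $0$ according as $\mu_d\subseteq(\Fp^\times)^2$, i.e.\ $d\mid\frac{p-1}{2}$, i.e.\ $\nu_2(p-1)>\nu_2(n-h)$. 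That is the same computation in cleaner language, and it is a genuine (if modest) improvement in transparency.

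One warning about the boundary bookkeeping you yourself flag as the main obstacle: if you carry it out with the convention $\legendre{0}{p}=0$, your (correct) values $T_1=(p-1)\gcd(n-h,p-1)$ and $T_2=(p-1)^2$ for even $h\geq 2$ give $pT_1-T_2=(\gcd(n-h,p-1)-1)(p^2-p)+(p-1)$, i.e.\ the displayed formula plus a lower-order term $p-1$; only for $h=0$, where the pair $(0,0)$ contributes to $T_1$ and $T_2=p^2$, does the computation land exactly on $(\gcd(p-1,n-h)-1)(p^2-p)$. The paper's proof reaches the clean formula for all even $h$ only by counting the solution $(0,0)$ inside $N_p$ and by taking $\sum_{x,y}\legendre{xy}{p}^2=p^2$, both of which implicitly treat $\legendre{0}{p}^2$ as $1$. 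So your more careful accounting is the right one; the discrepancy is $O(p)$ and does not affect the leading term or the bias conclusion, but you should not assert that your $T_1,T_2$ values reproduce the three displayed cases verbatim without addressing it.
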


The paper is organized as follows. In Section \ref{sec-HP} we recall the Hindry--Pacheco generalized version of Nagao's conjecture for surfaces and compare it to our version Conjecture \ref{conj-nagao-new}. In Section \ref{sec-prelim} we give some related results and some preliminary lemmas about computations with Legendre symbols. In Section \ref{sec-rank} we give constructions of hyperelliptic jacobians of rank $2g$, $2g+1$, and finally $4g+2$, in Sections \ref{sec-rank2g}, \ref{sec-rank2gplus1}, and \ref{sec-rank4gplus2}, respectively, and in Section \ref{sec-ex} we specialize our $4g+2$ construction in the case of $g=2$ and rank $10$. Finally, in Section \ref{sec-second}, we compute the second moments in a family of hyperelliptic curves of arbitrary genus.


\section*{Acknowledgements}
This research is from the Williams College SMALL REU Program organized by Steven J. Miller and was supported by Williams College and the National Science Foundation (grant number DMS1659037). The third named author was supported by the Finnerty Fund and thanks them for their support. The fifth named author was supported by the National Science Foundation (grant number DMS1561945). The authors would like to thank to Armand Brumer, Noam Elkies, and Joseph Silverman for helpful discussions.

\section{Generalized Nagao's conjecture for surfaces}\label{sec-HP}

In this section, we recall Hindry and Pacheco's \cite{HP} generalized Nagao's conjecture for a projective surface with a fibration onto a curve. Let $\X$ be a projective smooth irreducible surface over $\QQ$ with a proper flat fibration $f:\X\rightarrow \PP^{1}$ which allows us to have (arithmetic) curves of genus $g\geq 1$ for each fiber. Moreover, this implies that the generic fiber $X$ is a smooth irreducible curve over the function field of $\PP^{1}$, i.e., $\QQ(T)$. Denote by $J_{\X}$ the jacobian variety of $X$ and $(\tau,B)$ the $\QQ(T)/\QQ$-Chow trace of $J_{\X}$. The following theorem of Lang and N\'eron shows a Mordell--Weil analogue for a jacobian variety over function field.

\begin{theorem}
	The quotients $J_{\X}(\QQ(T))\big/\tau(B(\QQ))$ and $J_{\X}(\bar{\QQ}(T))\big/\tau(B(\bar{\QQ}))$	are both finitely generated groups.
\end{theorem}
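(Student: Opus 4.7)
The plan is to deduce this from the Lang--N\'eron theorem in its usual function-field formulation, combining height machinery on $J_\X$ with a weak Mordell--Weil statement over $K=\QQ(T)$ (respectively $\bar\QQ(T)$). We treat the two quotients uniformly, writing $k$ for the base field ($\QQ$ or $\bar\QQ$) and $K=k(T)$.

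First, fix a symmetric ample divisor class on $J_\X$ and let $\hat h \colon J_\X(\bar K) \to \RR_{\geq 0}$ denote the associated N\'eron--Tate canonical height. Because $J_\X/\X \to \PP^1$ extends to a N\'eron model over the generic fiber and $J_\X$ has only finitely many places of bad reduction, $\hat h$ is a well-defined quadratic form which, modulo torsion, has the Northcott finiteness property on $J_\X(L)$ for any finite extension $L/K$: points of bounded height form a finite set. The decisive input, due to Lang--N\'eron, is the identification of the kernel: $\hat h(P)=0$ if and only if a multiple of $P$ lies in $\tau(B(k))$. One direction is clear since $\tau(B(k))$ consists of ``constant'' sections. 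For the converse, a height-zero point spreads to a divisor on $\X$ that is numerically trivial on every fiber, and the universal property of the Chow trace $(\tau,B)$ then forces the corresponding section to factor through $\tau$.

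Second, we prove weak Mordell--Weil: for some integer $m\geq 2$, the group $J_\X(K)/m J_\X(K)$ is finite. Adjoining the $m$-torsion of $J_\X$ gives a finite extension $K'/K$, and Kummer theory embeds $J_\X(K)/m J_\X(K)$ into a subgroup of $\mathrm{Hom}(\mathrm{Gal}(\bar K/K'), J_\X[m])$ consisting of homomorphisms unramified outside the finite set $S$ of places of bad reduction for $J_\X$. Because $K'$ is a function field in one variable over $k$ and $S$ is finite, this cohomology group is finite; the same argument over $\bar\QQ(T)$ works provided one checks that $S$ remains finite, which it does since it is determined geometrically by $\X\to\PP^1$.

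Third, combine the two ingredients by descent on the quotient $G := J_\X(K)/\tau(B(k))$. The canonical height $\hat h$ descends, by the kernel characterization above, to a positive-definite quadratic form $\bar h$ on $G$ modulo torsion; the torsion subgroup of $J_\X(K)$ is finite by a standard specialization-plus-reduction argument. Weak Mordell--Weil then applies to $G$ (the quotient of a quotient is still $m$-divisible with finite cokernel), and the usual infinite descent, bounding a set of coset representatives by a height inequality of the shape $\hat h(P)\leq C$, extracts finitely many generators for $G$. This yields finite generation of $J_\X(\QQ(T))/\tau B(\QQ)$ and, by the same argument over $\bar\QQ$, of $J_\X(\bar\QQ(T))/\tau B(\bar\QQ)$.

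The principal obstacle is the kernel identification in the first step: it is precisely this that forces the quotient by $\tau B(k)$ in the statement, since without it the Mordell--Weil group over a function field can genuinely be infinitely generated (e.g.\ when $J_\X$ is isotrivial). Carrying this identification out requires the full theory of the Chow trace and is the substantive content of the Lang--N\'eron theorem; the height and Kummer steps are then essentially the same as in the number-field Mordell--Weil argument.
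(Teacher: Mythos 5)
The paper does not actually prove this statement: it is quoted verbatim as the classical Lang--N\'eron theorem (with \cite{C} cited for background on the Chow trace), so there is no internal proof to compare against. Judged on its own terms, your outline follows the standard Mordell--Weil template (canonical height, weak Mordell--Weil via Kummer theory, descent), which is the right skeleton, but it has a genuine gap at the decisive step.

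The gap is your assertion that $\hat h$ ``has the Northcott finiteness property on $J_\X(L)$: points of bounded height form a finite set.'' Over a function field $K=k(T)$ with $k$ infinite this is false: every point of $\tau(B(k))$ has canonical height zero, and $B(\bar\QQ)$ (already its torsion subgroup) is infinite whenever $B$ is a nonzero abelian variety; more generally, constant points of bounded height form infinite algebraic families. This failure is precisely why the theorem is stated for the quotient by $\tau(B(k))$. You do identify the kernel of $\hat h$ with the saturation of the trace, but that is not enough to run the descent in your third step: a positive-definite quadratic form on an abstract abelian group can have infinitely many elements of bounded norm (e.g.\ an infinite orthogonal direct sum of copies of $\ZZ$ with the standard form), so ``$\bar h$ positive definite on $G$ modulo torsion'' together with ``$G/mG$ finite'' does not formally yield finite generation. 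The substantive content of Lang--N\'eron is the finiteness, modulo $\tau(B(k))$ and torsion, of the set of points of bounded height; this is proved geometrically (bounded-height points lie in a bounded family, i.e.\ are parametrized by finitely many $k$-varieties of finite type, and the trace absorbs the positive-dimensional part), and your sketch nowhere supplies this input --- it is silently replaced by the false Northcott claim. A secondary slip of the same kind: for $k=\bar\QQ$ the torsion subgroup of $J_\X(K)$ need not be finite when the trace is nontrivial (again $\tau(B(\bar\QQ))$ contains infinite torsion), so what you must bound is the torsion of the quotient $G$, not of $J_\X(K)$ itself.
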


Hindry and Pacheco generalized Nagao's conjecture to compute the rank of  $J_{\X}(\QQ(T))/\tau(B(\QQ))$. We describe Hindry and Pacheco's work next (over $\QQ$ for simplicity, but they work over a number field). 

Let $\X$ be a smooth irreducible projective surface over $\QQ$ and let $C$ be a smooth irreducible projective curve over $\QQ$ which allows a proper flat morphism $f\colon \X \to C$ so that the fibers are curves of (arithmetic) genus $g\geq 1$. Let $J_\X$ be the jacobian of $\X/\QQ(C)$, and let $(\tau,B)$ be the $\QQ(C)/\QQ$-Chow trace of $J_\X$.  For a prime $p$, we can consider the reduction $\tilde{f}\colon \tilde{\X}\to \tilde{C}$ of $f$. Define a finite set of primes $S$ which satisfies the following conditions: for all $p\not\in S$, the surface $\X$ and the curve $C$ have good reduction and the reduced morphism $\tilde{f}\colon \tilde{\X}\to \tilde{C}$ is proper and flat. And, the fibers are curves with arithmetic genus $g$ over the residue field $\Fp$. 

Denote $\tilde{\X}_t=\tilde{f}^{-1}(t)$, i.e., the fiber of $\tilde{f}$ at $t\in \tilde{C}(\Fp)$. Denote $\overline{\X}_t = \X_t \times_\QQ\overline{\QQ}$, where $\overline{\QQ}$ is a fixed algebraic closure of $\QQ$. Let $G_\QQ$ be the absolute Galois group of $\QQ$. Let $\Frob_p\in G_\QQ$ be a Frobenius element and $I_p\subset G_\QQ$ be the inertia group at $p$. Also, define the discriminant of the fibration $f$ by $\Delta_f = \{t\in C : \tilde{\X}_t \text{ is singular} \}.$ By enlarging the set $S$ if necessary, we can make the discriminant of $\tilde{f}$ be the same as the discriminant of $f$ modulo $p$ outside of $S$. 

Let $\overline{\Frob}_p$ be the Frobenius automorphism on $H_{\text{\'et}}^1(\overline{\X}_t,\QQ_\ell)$. Define the trace of Frobenius using cohomology as
$$a_{\X_t}(p) = \operatorname{Tr}( \overline{\Frob}_p|H_{\text{\'et}}^1(\overline{\X}_t,\QQ_\ell) ),$$
where we consider $\ell$-adic cohomology with compact support if $t\in \tilde{\Delta}_f(\Fp)$, that is to say,
\begin{align}\label{tracefrob} a_{\X_t}(p) = \operatorname{Tr}( \overline{\Frob}_p|H_{\text{c}}^1(\overline{\X}_t,\QQ_\ell) ).\end{align}
Also define a trace of Frobenius for the Chow trace $B$ by
$$a_B(p) = \operatorname{Tr}( \overline{\Frob}_p|H_{\text{\'et}}^1(\overline{B},\QQ_\ell)^{I_p} ),$$
where $\overline{B} = B \times_\QQ\overline{\QQ}$. By enlarging the set $S$ if necessary, we can assume that $B$ has good reduction for primes $p\not\in S$, i.e., 
$$a_B(p) = \operatorname{Tr}( \overline{\Frob}_p|H_{\text{\'et}}^1(\overline{B},\QQ_\ell) ).$$
Now we are ready to state Hindry and Pacheco's version of \cite{Na3}.
\begin{conjecture}[Hindry--Pacheco, \cite{HP}]\label{conj-HP}
	Let $\X$, $C$, and $f$ be as above, and define 
	$$A_{\X,1}(p)=\frac{1}{p}\sum_{t\in C(\Fp)}a_{\X_t}(p), \text{ and } A^\ast_\X(p)=A_{\X,1}(p) - a_B(p).$$
	Then:
	\begin{equation}
	\label{nagao}
	\lim_{P\to\infty}\frac{1}{P}\sum_{\substack{p \notin S \\ p \leq P}}-A^\ast_\X(p)\cdot \log p= \rank \left( \frac{J_{\X}(\QQ(C))}{\tau(B(\QQ))} \right).
	\end{equation}

\end{conjecture}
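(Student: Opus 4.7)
The plan is to follow the strategy of Rosen--Silverman \cite{RS}, which established the elliptic case conditional on Tate's conjecture, and adapt it to the setting of higher genus fibered surfaces. First I would apply the Grothendieck--Lefschetz trace formula together with the Leray spectral sequence for $f \colon \X \to C$ to rewrite $\sum_{t \in \tilde C(\Fp)} a_{\X_t}(p)$ in terms of traces of Frobenius on the $\ell$-adic cohomology groups of $\X$. The key observation is that $H^2_{\text{\'et}}(\overline{\X},\QQ_\ell)$ decomposes (up to the spectral sequence filtration) into $H^0(\overline C, R^2 f_* \QQ_\ell)$, $H^1(\overline C, R^1 f_* \QQ_\ell)$, and $H^2(\overline C, R^0 f_* \QQ_\ell)$. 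The outer pieces contribute a predictable $p^2 + O(p)$ term (from fibral and base cohomology), while the middle piece $H^1(\overline C, R^1 f_* \QQ_\ell)$ carries precisely the arithmetic of the family of jacobians.

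Next I would apply a Tauberian argument (of Wiener--Ikehara type) to the $L$-function $L\bigl(H^1(\overline C, R^1 f_* \QQ_\ell), s\bigr)$ after removing the subsheaf corresponding to the constant part, which is exactly where the Chow trace $B$ lives; this subtraction is what the term $-a_B(p)$ accomplishes pointwise. Assuming meromorphic continuation and the expected functional equation, the Dirichlet series $\sum_p -A^\ast_\X(p) p^{-s} \log p$ has a pole at $s=1$ (after appropriate normalization of weights) whose order equals the order of the pole of this $L$-function at the edge of its critical strip, and a standard Tauberian theorem converts this to the Ces\`aro limit in \eqref{nagao}.

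The final step is to identify this pole order with the Mordell--Weil rank. By Tate's conjecture applied to $\X$, the order of the pole of $L(H^2(\X), s)$ at $s=2$ equals $\rank \NS(\X)$. After stripping off the boundary pieces of the Leray filtration (which correspond to trivial algebraic classes: the generic fiber, a section, the vertical components of reducible bad fibers, and the pullback of $\NS(B)$ via the Chow trace), the Shioda--Tate formula, in the form generalized by Shioda \cite{Sh} to jacobian fibrations of higher genus, identifies the remaining rank with $\rank\bigl(J_\X(\QQ(C))/\tau B(\QQ)\bigr)$. Bookkeeping the trivial classes against the $p^2 + O(p)$ main term in $A_{\X,1}(p)$ (which is removed in $A_\X^\ast(p)$ either explicitly or implicitly via the logarithmic averaging), one obtains precisely the equality in Conjecture \ref{conj-HP}.

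The principal obstacle is of course Tate's conjecture for the surface $\X$, which is unknown in general and is the only input that makes this approach conditional; it is available for rational elliptic surfaces, which is why the elliptic Nagao conjecture of \cite{RS} is unconditional in that range, but the surfaces arising from our higher-genus constructions will typically lie outside the known cases. A secondary technical difficulty is handling the bad fibers: the use of compactly supported cohomology in \eqref{tracefrob} at singular $t$ must be matched carefully with the stalks of $R^1 f_* \QQ_\ell$, with the discrepancy absorbed into local contributions controlled by inertia actions. For wildly ramified fibers, this requires conductor computations that can be nontrivial and must be encoded in the choice of exceptional set $S$.
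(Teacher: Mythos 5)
This statement is a conjecture, and the paper does not prove it: it appears only as Conjecture \ref{conj-HP}, and the sole thing the paper records about its status is the conditional theorem of Hindry--Pacheco quoted immediately afterwards (their Theorem 1.3). Your sketch is, in outline, exactly that known conditional argument --- Grothendieck--Lefschetz plus the Leray spectral sequence for $f$, isolation of $H^1(\overline{C},R^1f_*\QQ_\ell)$, a Tauberian theorem applied to the associated $L$-function, and a Shioda--Tate-type formula converting the pole order supplied by Tate's conjecture into $\rank\bigl(J_\X(\QQ(C))/\tau(B(\QQ))\bigr)$. So what you have written cannot be accepted as a proof of the statement; at best it re-derives the Hindry--Pacheco conditional result, and the conjecture itself remains open.

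Beyond that, you understate how conditional the argument is. Tate's conjecture for $\X$ is \emph{not} the only unproven input. Under Tate's conjecture alone one obtains only the residue formulation
$$\underset{s=1}{\operatorname{res}}\left(\sum_{p\notin S} -A^\ast_\X(p)\cdot\frac{\log p}{p^s}\right) \ = \ \rank\left(\frac{J_\X(\QQ(C))}{\tau(B(\QQ))}\right),$$
since the Dirichlet series is a priori controlled only in its half-plane of absolute convergence. To pass from this residue statement to the Ces\`aro limit \eqref{nagao} --- which is what Conjecture \ref{conj-HP} actually asserts --- your Wiener--Ikehara step requires the $L$-function $L_2(\X/\QQ,s)$ attached to $H^2_{\text{\'et}}(\overline{\X},\QQ_\ell)$ to admit analytic continuation to the line $\Re(s)=2$ and to be zero-free there. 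That is a second, independent, and equally inaccessible hypothesis, stated explicitly as an additional assumption in the theorem the paper quotes. You mention ``assuming meromorphic continuation and the expected functional equation'' in passing but then assert that Tate's conjecture is the only input making the approach conditional; that is incorrect, and the gap between the residue version and the limit version is precisely where this extra analytic input is consumed.
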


Under the hypothesis that the surface satisfies Tate's conjecture, they show the following.

\begin{theorem}[Hindry--Pacheco, \cite{HP}, Thm. 1.3] Suppose that the surface $\X$ satisfies Tate's conjecture. Then,
	$$\underset{s=1}{\operatorname{res}} \left(\sum_{p\not\in S}  -A^\ast_\X(p)\cdot \frac{\log p}{p^s} \right) = \rank \left( \frac{J_{\X}(\QQ(C))}{\tau(B(\QQ))} \right).$$
		If in addition the $L$ function $L_2(\X/\QQ,s)$ associated to $H_{\text{\'et}}^2(\overline{\X},\QQ_\ell)$ has an analytic continuation on $\Re(s)=2$, and does not have zeros on this line, then Conjecture \ref{conj-HP} holds.
\end{theorem}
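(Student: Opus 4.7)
The proof is a direct character-sum calculation. Start from the standard expression $a_{\X_t}(p)=-\sum_{x\in\mathbb{F}_p}\chi(f_t(x))$ for the trace of Frobenius of $\X_t:y^2=f_t(x)$, where $\chi=\bigl(\tfrac{\cdot}{p}\bigr)$ is the Legendre symbol (with the convention $\chi(0)=0$) and $f_t(x)=x^n+x^h t = x^h(x^{n-h}+t)$. Squaring and swapping summation order gives
\[
p\cdot A_{\X,2}(p) \ = \ \sum_{x,y\in\mathbb{F}_p}\sum_{t\in\mathbb{F}_p}\chi\bigl(f_t(x)f_t(y)\bigr).
\]
For $x,y\in\mathbb{F}_p^{*}$, the factorization $f_t(x)f_t(y)=x^h y^h\,(t+x^{n-h})(t+y^{n-h})$ exhibits the inner sum over $t$ as a quadratic Legendre sum, to which the classical identity
\[
\sum_{t\in\mathbb{F}_p}\chi\bigl((t+u)(t+v)\bigr)\ =\ \begin{cases} p-1, & u=v,\\ -1, & u\neq v,\end{cases}
\]
applies (the discriminant being $(u-v)^2$).

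Set $d:=\gcd(n-h,p-1)$. For $x,y\in\mathbb{F}_p^{*}$, the equation $x^{n-h}=y^{n-h}$ holds iff $y=\zeta x$ for some $\zeta$ in the group $\mu_d$ of $d$-th roots of unity in $\mathbb{F}_p^{*}$. Separating the double sum into the diagonal and its complement, absorbing $\chi(x^{2h})=1$, and carefully accounting for the boundary contributions from $x=0$ or $y=0$ (and the associated degenerate values of $t$), one arrives at a closed-form expression of the shape
\[
p\cdot A_{\X,2}(p)\ =\ p(p-1)\sum_{\zeta\in\mu_d}\chi(\zeta^h)\ -\ \Bigl(\sum_{x\in\mathbb{F}_p^{*}}\chi(x)^h\Bigr)^{2}\ +\ (\textit{boundary}),
\]
where the boundary terms are explicit and arise from fibers where $f_t$ vanishes. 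This reformulation is the crux: the two remaining sums are a character against the cyclic subgroup $\mu_d$ and an $h$-th power moment of $\chi$, each of which admits a clean orthogonality evaluation.

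The final step is a case split on the parity of $h$. For $h$ even, $\chi(\zeta^h)=1$ and $\chi(x)^h=1$, so the roots-of-unity sum is $d$ and the $h$-th power moment is $p-1$; these combine to give the claimed $(d-1)(p^2-p)$. For $h$ odd, $\chi(\zeta^h)=\chi(\zeta)$ is the restriction of the Legendre symbol to $\mu_d$, a character of order dividing $2$; it is trivial iff the generator $g^{(p-1)/d}$ of $\mu_d$ is a square in $\mathbb{F}_p^{*}$, iff $(p-1)/d$ is even. The key nontrivial step is the $2$-adic identification: using $\nu_2(d)=\min(\nu_2(n-h),\nu_2(p-1))$, the parity condition on $(p-1)/d$ is equivalent to $\nu_2(p-1)>\nu_2(n-h)$. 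In that regime the sum equals $d$, yielding $d(p^2-p)$; otherwise it vanishes (and $\sum_{x\ne 0}\chi(x)=0$ handles the other term). The hypothesis $\gcd(k,n-h,p-1)=1$ plays its role in the general $\X_{n,h,k}$ setting by guaranteeing that the substitution $s=t^k$ acts compatibly with the diagonal parametrization, so that the analysis for $k=1$ controls the moment for arbitrary admissible $k$. The principal obstacle is this $2$-adic reduction of the triviality of $\chi|_{\mu_d}$; once this is in hand the rest is routine orthogonality of characters.
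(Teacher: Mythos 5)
Your proposal does not address the statement at hand. The statement is the Hindry--Pacheco theorem (Thm.\ 1.3 of \cite{HP}): assuming Tate's conjecture for the surface $\X$, the residue at $s=1$ of the Dirichlet series $\sum_{p\notin S} -A^\ast_\X(p)\log p\, /\, p^s$ equals $\rank\left(J_{\X}(\QQ(C))/\tau(B(\QQ))\right)$, and, under the further hypothesis that $L_2(\X/\QQ,s)$ attached to $H_{\text{\'et}}^2(\overline{\X},\QQ_\ell)$ continues analytically to $\Re(s)=2$ without zeros on that line, the Ces\`aro-type limit in Conjecture \ref{conj-HP} holds. A proof of this must relate the average of the traces of Frobenius on $H^1$ of the fibers to the N\'eron--Severi group of $\X$ (via the Shioda--Tate-type decomposition of $\NS(\X)$ and the Lefschetz trace formula on the surface), invoke Tate's conjecture to identify the order of the pole of $L_2$ at $s=2$ with the rank of $\NS(\X)$, and then apply a Tauberian/Wiener--Ikehara argument to convert the analytic information on the line $\Re(s)=2$ into the asymptotic statement of the conjecture. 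None of this appears in your write-up. (The paper itself does not reprove this theorem; it is quoted from \cite{HP}. But in any case your argument is not a proof of it.)

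What you have written is instead a character-sum computation of the \emph{second moment} $p\cdot A_{\X_{n,h,k},2}(p)$ for the family $\X_{n,h,k}: y^2 = x^n + x^h T^k$, i.e., an argument aimed at Theorem \ref{thm-main-bias} in Section \ref{sec-second}. As a proof of \emph{that} theorem your outline is broadly in the same spirit as the paper's (factor out $x^h y^h$, evaluate the inner sum over $t$ with the quadratic Legendre-sum identity of Lemma \ref{AMLRlemma}, split on the diagonal $x^{n-h}\equiv y^{n-h}$, and resolve the odd-$h$ case by the $2$-adic criterion $\nu_2(p-1)>\nu_2(n-h)$ as in Lemma \ref{double_sums}), though you would still need to make the ``boundary'' terms and the reduction from general $k$ to $k=1$ precise (the paper does the latter via Lemma \ref{gcd-lemma} and a substitution $t\mapsto t^m$ with $\gcd(m,p-1)=1$, not via $s=t^k$, which is not a bijection on $\Fp^\times$ unless $\gcd(k,p-1)=1$). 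But this does not constitute progress on the Hindry--Pacheco statement you were asked to prove.
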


In order to compare Conjecture \ref{conj-nagao-new} with the Hindry--Pacheco version,  we quote the following lemma from \cite[Lemme 3.2]{HP} (see also \cite[Lemma 1.7.]{RS}).

\begin{lemma}
	\label{trace} Let $\X/\QQ$  be a surface as above, with $C=\PP^1$.
	Let $p\notin S$, and $t\in \Fp$. Denote by $m_{t,p}$ the number of $\Fp$-rational components of the reduced fiber $\tilde{\X}_{t}$ modulo $p$. We denote by $\#\X_t(\Fp)$ the number of solutions on the reduced fiber $\tilde{\X}_{t}$ over $\Fp$. Let $a_{\X_t}(p)$ be the trace of Frobenius defined by Equation (\ref{tracefrob}). Then
	\begin{equation}
	\label{trace1}
	a_{\X_t}(p)=p\cdot m_{t,p}+1-\#\X_t(\Fp).
	\end{equation}
\end{lemma}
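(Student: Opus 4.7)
The plan is to apply the Grothendieck--Lefschetz trace formula for $\ell$-adic cohomology with compact supports to the reduced fiber $\tilde{\X}_t$ over $\Fp$. Since $f$ is proper (for $p\notin S$), $\tilde{\X}_t$ is a proper (possibly singular, possibly reducible) curve over $\Fp$, so $H^i_c(\overline{\X}_t, \QQ_\ell) = H^i(\overline{\X}_t, \QQ_\ell)$ for all $i$. The trace formula then reads
$$\#\X_t(\Fp) \;=\; \sum_{i=0}^{2} (-1)^i \tr\bigl(\overline{\Frob}_p \mid H^i_c(\overline{\X}_t, \QQ_\ell)\bigr),$$
and since the middle term is $-a_{\X_t}(p)$ by definition (\ref{tracefrob}), the task reduces to computing the traces on $H^0_c$ and $H^2_c$.

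For $H^0$, I would argue that $\tilde{\X}_t$ is geometrically connected for every $p\notin S$: the generic fiber of $f$ is a geometrically integral curve of genus $g\geq 1$, and properness together with flatness propagate connectedness to every geometric fiber via Stein factorization applied to $f_*\mathcal{O}_\X$, after possibly enlarging $S$. Hence $H^0(\overline{\X}_t, \QQ_\ell) \cong \QQ_\ell$ with trivial Galois action, contributing $1$ to the trace.

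For $H^2$, I would invoke the standard fact that for a proper reduced curve $Y$ over an algebraically closed field, $H^2(Y, \QQ_\ell) \cong \bigoplus_Z \QQ_\ell(-1)$, indexed by the one-dimensional irreducible components $Z$ of $Y$. One reduces to the normalization $\nu\colon Y^\nu \to Y$ via the short exact sequence $0 \to \QQ_\ell \to \nu_*\QQ_\ell \to \mathcal{C} \to 0$, whose cokernel $\mathcal{C}$ is skyscraper-supported at the singular points and therefore contributes only to $H^0$; thus $H^2(Y, \QQ_\ell) \cong H^2(Y^\nu, \QQ_\ell)$, and the latter is a direct sum of $\QQ_\ell(-1)$'s indexed by components. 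Geometric Frobenius permutes these components: those defined over $\Fp$ contribute $p$ via the Tate twist, while components in nontrivial Frobenius orbits contribute $0$. Hence $\tr\bigl(\overline{\Frob}_p \mid H^2_c(\overline{\X}_t, \QQ_\ell)\bigr) = p \cdot m_{t,p}$.

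Assembling the pieces gives
$$\#\X_t(\Fp) \;=\; 1 - a_{\X_t}(p) + p\cdot m_{t,p},$$
which rearranges to the claim. The main subtlety is the identification of $H^2$ on a non-normal, reducible curve; once this and the geometric connectedness of the fibers are established, the lemma follows directly from Lefschetz. One sanity check: for a smooth irreducible fiber one has $m_{t,p}=1$, recovering the usual formula $a_{\X_t}(p) = p + 1 - \#\X_t(\Fp)$.
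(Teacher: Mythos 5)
The paper does not actually prove this lemma: it is quoted from Hindry--Pacheco \cite[Lemme 3.2]{HP} (see also \cite[Lemma 1.7]{RS}), so there is no in-paper argument to compare against. That said, your proof is correct and is essentially the standard derivation underlying those references: apply the Grothendieck--Lefschetz trace formula to the proper fiber (so that $H^i_c=H^i$ and the distinction the paper draws for $t\in\tilde\Delta_f$ is immaterial, as you note), identify $H^0$ with $\QQ_\ell$ carrying the trivial action via geometric connectedness of the fibers (preserved for $p\notin S$ after enlarging $S$, using $f_*\mathcal{O}_{\X}=\mathcal{O}_{\PP^1}$ and base change), and compute $H^2$ through the normalization sequence as the permutation module on the one-dimensional geometric components twisted by $\QQ_\ell(-1)$, so Frobenius contributes $p\cdot m_{t,p}$. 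Two small points are worth making explicit: fibers of a flat family may be non-reduced, but \'etale cohomology and point counts depend only on the reduced subscheme, so your reduction to a proper reduced curve is harmless; and since $\tilde f$ is flat of relative dimension one, the fibers are purely one-dimensional, so no zero-dimensional components disturb the $H^0$ and $H^2$ bookkeeping. Your closing sanity check (a smooth irreducible fiber gives $m_{t,p}=1$ and recovers $a_{\X_t}(p)=p+1-\#\X_t(\Fp)$) is exactly the observation the paper records immediately after the lemma.
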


In particular, when $\X_{t}$ has good reduction at $p\notin S$, we get $m_{t,p}=1$ and therefore the definitions of $a_{\X_t}(p)$ in Equation (\ref{tracefrob}) and Conjecture \ref{conj-nagao-new} coincide. 

\section{Preliminaries and Auxiliary Lemmas}\label{sec-prelim}

First, we cite a result of Nagao, about the convergence of the limits that appear in the conjectures. Below, $\pi(x)$ is the prime counting function.

\begin{lemma}[Nagao, \cite{Na3}, Lemmas 2.1 and 2.2]\label{lem-nagaolimit} Let $\{c_p \}_p$ be a bounded sequence of non-negative numbers indexed by prime numbers. If one of the sequences of numbers
	$$\left\{\frac{1}{\pi(N)} \sum_{p\leq N} c_p  \right\}_N \text{ and } \left\{\frac{1}{N} \sum_{p\leq N} c_p\cdot \log p  \right\}_N$$
	converges, then both of them converge to a common limit. In particular,  $\{\frac{1}{N}\sum_{p\leq N} \log p\}_N$ is a  convergent sequence, and the limit is $1$.
\end{lemma}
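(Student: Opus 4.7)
My plan is to use partial (Abel) summation to pass between $\sum c_p$ and $\sum c_p \log p$, controlling the main terms via the Prime Number Theorem and the error terms via Chebyshev-type bounds. Set $S(N) = \sum_{p \leq N} c_p$ and $T(N) = \sum_{p \leq N} c_p \log p$; boundedness of $c_p$ together with Chebyshev gives $S(N) = O(N/\log N)$ and $T(N) = O(N)$. Viewing $S$ as a step function, integration by parts against $\log t$ and against $1/\log t$ yields
\begin{equation*}
T(N) \;=\; S(N)\log N \;-\; \int_2^N \frac{S(t)}{t}\,dt, \qquad S(N) \;=\; \frac{T(N)}{\log N} \;+\; \int_2^N \frac{T(t)}{t(\log t)^2}\,dt,
\end{equation*}
which will drive both implications.

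For the forward direction, I would assume $S(N)/\pi(N) \to L$. By PNT in the form $\pi(N) \sim N/\log N$, this gives $S(N)\log N = LN + o(N)$, while the Chebyshev bound on $S$ forces $\int_2^N S(t)/t\,dt = O\!\left(\int_2^N dt/\log t\right) = O(N/\log N) = o(N)$; the first identity then yields $T(N)/N \to L$. Conversely, if $T(N)/N \to L$, then $T(N)/\log N = L \cdot N/\log N + o(N/\log N) = L \pi(N) + o(\pi(N))$ by PNT, and the Chebyshev bound on $T$ gives $\int_2^N T(t)/(t(\log t)^2)\,dt = O\!\left(\int_2^N dt/(\log t)^2\right) = O(N/(\log N)^2) = o(\pi(N))$; the second identity delivers $S(N)/\pi(N) \to L$.

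For the ``in particular'' clause, I would take $c_p \equiv 1$: then $S(N)/\pi(N) = 1$ converges trivially to $1$, so the equivalence just established forces $T(N)/N = \theta(N)/N \to 1$. The main obstacle is that the main-term analysis above really does need PNT-strength control on $\pi(N)$ (equivalently $\theta(N) \sim N$), not merely Chebyshev's bounds, since otherwise one loses a constant factor when converting $S(N)\log N$ into a multiple of $\pi(N)$. In particular the ``in particular'' clause is essentially equivalent to PNT, so the lemma is best viewed as packaging PNT together with partial summation into the convenient Abelian/Tauberian form needed for Nagao's conjecture; no finer arithmetic input (such as zero-free regions or error terms) is required.
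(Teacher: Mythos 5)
Your argument is correct: the two Abel-summation identities are right, the Chebyshev bounds $S(N)=O(N/\log N)$ and $T(N)=O(N)$ do make both integral terms into error terms of the appropriate order, and the Prime Number Theorem is exactly the input needed to convert between the normalizations $\pi(N)$ and $N$ (your observation that the ``in particular'' clause is just $\theta(N)\sim N$ is accurate). The paper itself gives no proof of this lemma --- it is quoted verbatim from Nagao's Lemmas 2.1 and 2.2 in \cite{Na3} --- and the partial-summation-plus-PNT route you take is the standard argument and essentially the one in that source, so there is nothing to reconcile.
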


Next we cite work of Zarhin \cite{Za1,Za2,Za3} that we shall use in order to prove that a Chow trace is trivial.

\begin{theorem}[Zarhin]\label{thm-zarhin} Let $K$ be a field of characteristic different from $2$, and suppose that $f\in K[x]$ is a polynomial of degree $n$ without multiple roots, such that $\Gal(f)$ is either $S_n$ or $A_n$. Let $C: y^2=f(x)$ and let $J(C_f)$ be its jacobian. Assume also that either $\operatorname{char}(K)\neq 3$ or $n\geq 7$. Then, $\operatorname{End}(J(C_f))=\ZZ$. In particular, $J(C_f)$ is an absolutely simple abelian variety. 
\end{theorem}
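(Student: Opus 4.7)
The plan is to analyze the mod-$2$ Galois representation of $J(C_f)$ and exploit the very large Galois image coming from the hypothesis on $\Gal(f)$. First I would invoke Mumford's explicit description of the $2$-torsion of a hyperelliptic jacobian: denoting by $\mathfrak{R}$ the set of roots of $f$ in an algebraic closure $\bar K$, there is a canonical $\Gal(\bar K/K)$-equivariant isomorphism between $J(C_f)[2]$ and the ``permutation heart'' $V$, namely the kernel of the sum map $\mathbb{F}_2^{\mathfrak{R}}\to\mathbb{F}_2$ when $n$ is odd, and the same kernel modulo the all-ones vector when $n$ is even. The symmetric group $S_n$ acts on $V$ by permutation of coordinates, and the hypothesis $\Gal(f)\in\{S_n,A_n\}$ says that the Galois image inside $\mathrm{GL}(V)$ is either all of $S_n$ or all of $A_n$ acting through this representation.

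The technical heart is to establish that $V$ is a \emph{very simple} $\Gal(f)$-module in Zarhin's sense: every $\Gal(f)$-stable $\mathbb{F}_2$-subalgebra $R\subseteq \End_{\mathbb{F}_2}(V)$ equals either the scalars $\mathbb{F}_2$ or the full matrix algebra $\End_{\mathbb{F}_2}(V)$. This is a purely modular-representation-theoretic assertion about $S_n$ and $A_n$ acting on the permutation heart in characteristic $2$; it is driven by the high multiple transitivity of these groups on $\mathfrak{R}$. The delicate low-degree cases where the statement can fail are caused by the exceptional isomorphisms $A_5\cong \mathrm{PSL}_2(\mathbb{F}_5)$ and $A_6\cong \mathrm{PSL}_2(\mathbb{F}_9)$, which in characteristic $3$ admit extra $\Gal(f)$-stable subalgebras strictly between $\mathbb{F}_2$ and the full matrix algebra; this is precisely why the theorem excludes the range $\Char(K)=3$, $n\leq 6$.

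Once very-simplicity is in hand, I would conclude as follows. The endomorphism ring $\End(J(C_f))$ acts on $J(C_f)[2]=V$ commuting with the Galois action, so the image $R$ of $\End(J(C_f))\otimes \mathbb{F}_2$ inside $\End_{\mathbb{F}_2}(V)$ is a $\Gal(f)$-stable subalgebra; by very-simplicity $R$ is either $\mathbb{F}_2$ or all of $\End_{\mathbb{F}_2}(V)$. The second alternative is ruled out by the Rosati involution associated with a polarization on $J(C_f)$: its positivity together with Galois-equivariance forces the image to preserve the natural alternating form on $V$ and therefore to be a proper subalgebra when $g\geq 1$. Hence $R=\mathbb{F}_2$. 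Since $\End(J(C_f))$ is a torsion-free $\ZZ$-module of finite rank whose reduction modulo $2$ is $\mathbb{F}_2$, a standard Nakayama/Tate-module argument at $\ell=2$ gives $\End(J(C_f))=\ZZ$. Absolute simplicity follows immediately: the same analysis over $\bar K$ yields $\End(J(C_f))\otimes\bar{\QQ}=\bar{\QQ}$, so no nontrivial idempotent exists and $J(C_f)$ cannot split up to isogeny.

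The main obstacle is step two: proving that the permutation heart is a very simple $\mathbb{F}_2[\Gal(f)]$-module. This requires a careful group-theoretic analysis of the $\Gal(f)$-stable subalgebras of the matrix algebra $\End_{\mathbb{F}_2}(V)$, using the multiple transitivity of $S_n$ and $A_n$ to rule out nontrivial intermediate subalgebras; the hypothesis $\Char(K)\neq 3$ or $n\geq 7$ is tailored exactly to avoid the single exceptional configuration where this analysis breaks down. Once the representation-theoretic input is granted, the passage from ``large Galois image on $2$-torsion'' to ``trivial endomorphism ring'' is a standard bootstrapping template.
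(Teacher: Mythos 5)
First, note that the paper does not actually prove this statement: it is quoted as ``Theorem (Zarhin)'' directly from \cite{Za1,Za2,Za3} and used as a black box to show that certain Chow traces are trivial. So the only meaningful comparison is with Zarhin's published argument, and at the level of architecture your sketch does reproduce it: identify $J(C_f)[2]$ with the heart of the $\mathbb{F}_2$-permutation module on the roots of $f$, prove that this heart is a \emph{very simple} $\Gal(f)$-module, and bootstrap from ``the image of $\End(J)/2\End(J)$ is a Galois-normalized unital subalgebra of $\End_{\mathbb{F}_2}(J[2])$'' to $\End(J(C_f))=\ZZ$.

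Two steps, however, would fail as written. (1) Your exclusion of the alternative $R=\End_{\mathbb{F}_2}(V)$ via the Rosati involution does not work: the full matrix algebra is stable under the adjoint involution of any nondegenerate pairing, so compatibility with the Weil pairing never forces the image to be proper. The correct argument is a rank count: $R=\End_{\mathbb{F}_2}(V)$ would force $\rank_{\ZZ}\End(J)=4g^2$, which by Albert's classification is impossible in characteristic $0$ and in characteristic $p>0$ can occur only when $J$ is supersingular; excluding supersingular hyperelliptic jacobians is a separate, substantial theorem of Zarhin (\cite{Za3}), and that is precisely where the hypothesis ``$\Char(K)\neq 3$ or $n\geq 7$'' enters. (2) Relatedly, attributing that hypothesis to $A_5\cong\mathrm{PSL}_2(\mathbb{F}_5)$ and $A_6\cong\mathrm{PSL}_2(\mathbb{F}_9)$ producing extra stable subalgebras ``in characteristic $3$'' conflates the characteristic of the base field $K$ with the coefficient field of the representation: very-simplicity of the heart is a statement about $\mathbb{F}_2[S_n]$- and $\mathbb{F}_2[A_n]$-modules and cannot see $\Char(K)$ at all (the exceptional isomorphisms relevant to the $\mathbb{F}_2$-module theory are things like $A_5\cong\mathrm{SL}_2(\mathbb{F}_4)$ and $A_8\cong\mathrm{GL}_4(\mathbb{F}_2)$, and any trouble they caused would be present in every characteristic). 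Finally, be aware that the statement as reproduced in the paper silently needs $n\geq 5$: for $n=3$ the heart is a $2$-dimensional $\mathbb{F}_2$-space on which $S_3=\mathrm{GL}_2(\mathbb{F}_2)$ normalizes the intermediate subalgebra $\mathbb{F}_4$, and indeed CM elliptic curves $y^2=f(x)$ with $\Gal(f)=S_3$ exist; your very-simplicity step is exactly where that restriction must be imposed.
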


Let $C: y^2=f(x)$ be a hyperelliptic curve, with $f(x)$ of odd degree, and let $p$ be a prime number. Since the degree of $f$ is odd, there is a unique point $\mathcal{O}$ at infinity. Then, one can count the (affine) points on $C^\ast(\Fp)=C(\Fp)-\{\mathcal{O} \}$ via Legendre symbols, by
\begin{align*} \# C^\ast(\Fp) &= \sum_{\substack{x(p) \\ f(x)\equiv 0 \bmod p}}1 + \sum_{\substack{x(p) \\ f(x)\not\equiv 0 \bmod p}} \left( 1 + \legendre{f(x)}{p}\right) =p+ \sum_{\substack{x(p)}} \legendre{f(x)}{p},\end{align*}
where $x(p)=\{ x : 0\leq x \leq p-1\}$. Thus, $$a_C(p) = p+1 - \# C(\Fp) = p+1 - \left(1 + p+ \sum_{\substack{x(p) }} \legendre{f(x)}{p}\right)=-\sum_{\substack{x(p) }} \legendre{f(x)}{p},$$
 Thus, if $\X: y^2=f(x,T)$ is a hyperelliptic surface, then
$$A_{\X,1}(p) = \frac{1}{p}\sum_{t(p)} a_{\X_t}(p) = -\frac{1}{p} \sum_{\substack{t,x(p)  }} \legendre{f(x,t)}{p},$$
where $t,x(p)$ means that both $t$ and $x$ range in the interval $0,\ldots,p-1$. In the remainder of this section, we show a number of lemmas about sums of Legendre symbols that we will use in the next sections. First, we reproduce \cite[Lemma A.2]{ALM} that shall be used in computing first and second moments.
\begin{lemma}\label{AMLRlemma}
	Assume $a$ and $b$ are not both zero $\bmod p$ and $p > 2$. Then
	\begin{equation}
	\sum_{t=0}^{p-1} \legendre{at^2 + bt + c}{p} = 
	\begin{cases}
	(p-1)\legendre{a}{p} & \ \mathrm{if} \ p \mid b^2 - 4ac, \\
	-\legendre{a}{p} & \ \mathrm{otherwise.}
	\end{cases}
	\end{equation}
\end{lemma}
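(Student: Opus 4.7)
The plan is to reduce the sum to a standard character sum via completing the square. First I would dispose of the degenerate case $a \equiv 0 \pmod p$: the hypothesis then forces $b \not\equiv 0$, so $t \mapsto bt + c$ is a bijection of $\Fp$ and $\sum_{y \in \Fp}\legendre{y}{p} = 0$. This agrees with the ``otherwise'' branch of the formula, since $\legendre{a}{p} = 0$ and $b^2 - 4ac = b^2 \not\equiv 0 \pmod p$.

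For the main case $a \not\equiv 0 \pmod p$, I would exploit the identity $4a(at^2+bt+c) = (2at+b)^2 - (b^2 - 4ac)$. Multiplicativity of the Legendre symbol, together with $\legendre{4}{p}=1$, rewrites each summand as $\legendre{a}{p}\legendre{(2at+b)^2 - D}{p}$ with $D = b^2 - 4ac$. The substitution $u = 2at+b$ is a bijection of $\Fp$ because $2a$ is a unit, which reduces everything to evaluating
$$S(D) \ = \ \sum_{u \in \Fp}\legendre{u^2 - D}{p}.$$
If $p \mid D$ the sum collapses at once to $\sum_u \legendre{u^2}{p} = p - 1$, delivering the first branch.

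The only real content of the lemma is therefore $S(D) = -1$ for $D \not\equiv 0 \pmod p$. I would split off $u=0$ and use that $u \mapsto u^2$ covers each nonzero square exactly twice, giving
$$S(D) \ = \ \legendre{-D}{p} + \sum_{w \in \Fp^\times}\left(1 + \legendre{w}{p}\right)\legendre{w-D}{p}.$$
The ``$1$'' contribution $\sum_{w \neq 0}\legendre{w - D}{p}$ equals $-\legendre{-D}{p}$ by translation $z = w-D$. For the ``$\legendre{w}{p}$'' contribution, I would factor $\legendre{w(w-D)}{p} = \legendre{1 - D/w}{p}$ and use that $w \mapsto 1 - D/w$ is a bijection of $\Fp^\times$ onto $\Fp \setminus \{1\}$, yielding $\sum_{y \neq 1}\legendre{y}{p} = -1$. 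The $\legendre{-D}{p}$ pieces cancel, leaving $S(D) = -1$ and hence the second branch of the formula.

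The main obstacle is the evaluation $S(D) = -1$; the preceding reductions are bookkeeping. This is a classical identity admitting several proofs (via Jacobi sums, or by counting $\Fp$-points on the affine conic $v^2 = u^2 - D$), so one could alternatively invoke it as a black box rather than include the short direct argument sketched above.
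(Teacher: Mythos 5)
Your proof is correct. The paper itself does not prove this lemma---it reproduces the statement from \cite[Lemma A.2]{ALM} without argument---and your completion-of-the-square reduction to the classical evaluation $\sum_{u}\legendre{u^2-D}{p}=-1$ (including the correct handling of the degenerate case $a\equiv 0$, where the formula's ``otherwise'' branch gives $0$) is exactly the standard proof given in that reference.
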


\begin{lemma}\label{lem-addstozero} For any $p>2$, we have $\sum_{t(p)} \legendre{at+b}{p} = 0$, for any integers $a,b\in\Z$ with $a\not\equiv 0 \bmod p$.
\end{lemma}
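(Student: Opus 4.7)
The plan is to exploit the fact that multiplication by a unit is a bijection on $\Z/p\Z$. Since $a \not\equiv 0 \bmod p$, the map $t \mapsto at + b$ is a bijection from $\{0,1,\ldots,p-1\}$ to itself modulo $p$. Consequently, the substitution $u \equiv at + b \bmod p$ rewrites the sum as
\begin{equation*}
\sum_{t(p)} \legendre{at+b}{p} \ = \ \sum_{u(p)} \legendre{u}{p}.
\end{equation*}

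Next, I would invoke the standard fact that for an odd prime $p$, the nonzero residues mod $p$ split evenly into $(p-1)/2$ quadratic residues and $(p-1)/2$ nonresidues, while $\legendre{0}{p} = 0$. Thus
\begin{equation*}
\sum_{u(p)} \legendre{u}{p} \ = \ 0 + \frac{p-1}{2}(+1) + \frac{p-1}{2}(-1) \ = \ 0,
\end{equation*}
which is the desired identity.

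There is no real obstacle here; the entire content is the bijection and the elementary count of quadratic residues versus non-residues modulo an odd prime. If one preferred to avoid citing that count directly, an alternative is to choose any non-residue $c \in \Fp^\times$ and observe that multiplication by $c$ permutes $\Fp^\times$ while negating each Legendre symbol, which forces $\sum_{u \in \Fp^\times} \legendre{u}{p} = -\sum_{u \in \Fp^\times} \legendre{u}{p}$ and hence the sum vanishes. Either route gives the lemma immediately.
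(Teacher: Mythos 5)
Your proof is correct and matches the paper's argument exactly: both use the bijection $t \mapsto at+b$ on $\Fp$ (valid since $a \not\equiv 0 \bmod p$) and then the equal count of $(p-1)/2$ quadratic residues and non-residues to conclude the sum vanishes. The alternative you sketch via multiplication by a non-residue is a nice extra, but the core route is the same.
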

\begin{proof}
	As $t$ runs over all elements of $\Fp$, the quantity $at+b$ also runs over all elements of $\Fp$. There are $(p-1)/2$ quadratic residues, and $(p-1)/2$ quadratic non-residues, so the sum cancels out. 
\end{proof}

\begin{lemma}
	\label{power}
	Fix \(n\geq 1\) and a prime number \(p\).
	Then, the number of solutions $(x,y)$ to the congruence \(x^n \equiv y^n \bmod p\) is given by \(\gcd(p-1,n)\cdot (p-1) + 1\).
\end{lemma}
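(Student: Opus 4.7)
The plan is to split the count according to whether $x \equiv 0 \bmod p$ or not. If $x \equiv 0$, then $x^n \equiv 0 \equiv y^n$ forces $y \equiv 0 \bmod p$, contributing exactly the single solution $(0,0)$. For the remaining case, both $x$ and $y$ lie in $\mathbb{F}_p^\ast$, and the congruence $x^n \equiv y^n \bmod p$ can be rewritten, after multiplying by $y^{-n}$, as $(x y^{-1})^n \equiv 1 \bmod p$.

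Next I would appeal to the standard fact that $\mathbb{F}_p^\ast$ is cyclic of order $p-1$, so the map $z \mapsto z^n$ on $\mathbb{F}_p^\ast$ is a group homomorphism whose kernel $\{z \in \mathbb{F}_p^\ast : z^n \equiv 1\}$ has order exactly $\gcd(n,p-1)$. Consequently, for each of the $p-1$ choices of $y \in \mathbb{F}_p^\ast$, the number of $x \in \mathbb{F}_p^\ast$ satisfying $xy^{-1} \in \ker$ is precisely $\gcd(n,p-1)$. This yields $(p-1)\gcd(n,p-1)$ nonzero pairs.

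Adding the contribution from the zero pair gives the total count $\gcd(p-1,n)(p-1) + 1$, as claimed. There is no real obstacle here; the only subtlety is remembering to handle the $x \equiv 0$ case separately so that it is neither double-counted nor ignored, since the change of variables $z = xy^{-1}$ requires invertibility of $y$.
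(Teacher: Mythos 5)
Your proof is correct and follows essentially the same route as the paper's: both isolate the single solution $(0,0)$ and then count nonzero pairs using the cyclic structure of $\mathbb{F}_p^\times$, the paper via explicit discrete logarithms ($x\equiv g^a$, $y\equiv g^b$ and solving $n(a-b)\equiv 0 \bmod p-1$) and you via the equivalent observation that the kernel of $z\mapsto z^n$ has order $\gcd(n,p-1)$. The two arguments are the same computation in slightly different notation, and your handling of the $x\equiv 0$ case is exactly what is needed.
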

\begin{proof}
	First note that $x \equiv y \equiv 0 \bmod p$ is a solution. Otherwise, let $x,y$ be non-zero mod $p$, with $x^n\equiv y^n \bmod p$. Let $g$ be a generator of $\Fp^\times$, and write $x \equiv g^a$ and $y \equiv g^b \bmod p$ for some $a$ and $b$. Hence we have 
	$g^{an} \equiv g^{bn} \bmod p$ and so $g^{an-bn} \equiv 1 \bmod p.$
	Since the order of $g$ is $p-1$, by Lagrange's theorem, we have
	$$an - bn \equiv n(a-b) \equiv 0 \bmod {p-1}.$$ 
	The congruence $nX \equiv 0 \bmod p-1$ has $\gcd(n,p-1)$ solutions for $X$ and each solution for $X$ yields $p-1$ solutions $(a,b)$. Hence, there are $\gcd(n,p-1)\cdot (p-1) + 1$ solutions to the original congruence. 
\end{proof}

\newcommand{\mindicator}{\textbf{*1}}

\begin{lemma}
	\label{double_sums}
	Let \(h \geq 2\) be even.
	Let \(\displaystyle S_h(p) := \sum_{\substack{x,y(p) \\ x^h \equiv y^h \bmod p}} \legendre{xy}{p}\).
	Furthermore, let \(\nu_2\) be the usual \(2\)-adic valuation.
	Then,
	\begin{equation}
	S_h(p) =
	\begin{cases}
	\gcd(h,p-1) (p-1) & \text{if }\nu_2(p-1) > \nu_2(h),\\
	0                 & \textnormal{otherwise}.
	\end{cases}
	\end{equation}
\end{lemma}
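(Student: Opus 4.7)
The plan is to reduce the double sum to a character sum over the subgroup of $h$-th roots of unity in $\Fp^\times$, and then use a generator argument together with $2$-adic valuations to decide when that character sum is nontrivial.

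First I would dispose of the zero term. If $x\equiv 0 \bmod p$, then $x^h\equiv y^h$ forces $y\equiv 0 \bmod p$, and in this case $\legendre{xy}{p}=0$, so the pair $(0,0)$ contributes nothing. Assume now $x,y\in \Fp^\times$. Writing $x=\zeta y$, the condition $x^h\equiv y^h$ becomes $\zeta^h\equiv 1$. Let $\mu_h\subseteq \Fp^\times$ denote the set of $h$-th roots of unity; it is a cyclic subgroup of order $d:=\gcd(h,p-1)$. Parametrizing in this way and using $\legendre{y^2}{p}=1$ for $y\neq 0$, I would obtain
\begin{equation*}
S_h(p) \;=\; \sum_{\zeta\in\mu_h}\sum_{y\in \Fp^\times}\legendre{\zeta y^2}{p} \;=\; (p-1)\sum_{\zeta\in \mu_h}\legendre{\zeta}{p}.
\end{equation*}

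Next I would evaluate the inner character sum via a generator of $\Fp^\times$. Pick a primitive root $g$ mod $p$, set $\omega=g^{(p-1)/d}$ so that $\mu_h=\{\omega^k : 0\leq k<d\}$, and let $m=(p-1)/d$. Since $\legendre{g}{p}=-1$, one has $\legendre{\omega^k}{p}=(-1)^{mk}$, so
\begin{equation*}
\sum_{\zeta\in\mu_h}\legendre{\zeta}{p} \;=\; \sum_{k=0}^{d-1}(-1)^{mk}.
\end{equation*}
Because $p$ is odd and $h$ is even, $d$ is even, so this geometric sum equals $d$ if $m$ is even and $0$ if $m$ is odd.

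The final step is a $2$-adic valuation reading. Writing $\nu_2(d)=\min(\nu_2(h),\nu_2(p-1))$, the parity of $m=(p-1)/d$ is determined by whether $\nu_2(d)<\nu_2(p-1)$: if $\nu_2(p-1)>\nu_2(h)$, then $\nu_2(d)=\nu_2(h)<\nu_2(p-1)$, so $m$ is even and the character sum equals $d$; otherwise $\nu_2(d)=\nu_2(p-1)$, so $m$ is odd and the character sum vanishes. Multiplying by the prefactor $p-1$ yields the two cases of the lemma. The only genuine subtlety is this last $2$-adic bookkeeping; the rest of the argument is a routine parametrization of a multiplicative relation.
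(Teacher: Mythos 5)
Your proof is correct and is essentially the paper's argument in multiplicative rather than additive dress: both fix a primitive root, parametrize the nonzero solutions of $x^h\equiv y^h$ by an element of the subgroup of order $\gcd(h,p-1)$, and reduce everything to whether $(p-1)/\gcd(h,p-1)$ is even, i.e., to the condition $\nu_2(p-1)>\nu_2(h)$. The paper carries out the final step additively in $\Z/(p-1)\Z$ by computing $\left| \left\langle \frac{p-1}{\gcd(h,p-1)} \right\rangle \cap \langle 2 \rangle \right|$, whereas you evaluate the character sum $\sum_{\zeta\in\mu_h}\legendre{\zeta}{p}$ directly via $(-1)^{mk}$; these are the same computation.
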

\begin{proof}
	We proceed by passing from the multiplicative group \(\mathbb{F}_p^\times\) to the cyclic additive group \(\Z/(p-1)\Z\) (by implicitly fixing a primitive root).
	In \(\Z/(p-1)\), we use \(\langle a \rangle\) to denote the subgroup generated by \(a\).
	\begin{align}
	S_h(p)
	&:= \sum_{\substack{a,b(p) \\ a^h \equiv b^h \bmod p}} \legendre{ab}{p} =  \sum_{\substack{x,y(p-1) \\ ha \equiv hb\bmod p-1}} \left(
	\begin{cases}
	1  & a+b \in \langle 2 \rangle\\
	-1 & \textnormal{otherwise}
	\end{cases}
	\right).
	\end{align}
	We know that \(h(y-x) \equiv 0 \bmod p\) iff \(y-x \in \left\langle \frac{p-1}{\gcd(h,p-1)} \right\rangle\) iff \(y \in x + \left\langle \frac{p-1}{\gcd(h,p-1)} \right \rangle\).
	Furthermore, \(x + y \in \langle 2 \rangle\) iff \(x + y - 2x \in \langle 2 \rangle\) iff \(y \in x + \langle 2 \rangle\).
	Thus, we have
	\begin{align}
	&= \sum_{\substack{x,y(p-1) \\ y \in \left\langle \frac{p-1}{\gcd(h,p-1)} \right\rangle}} \left(
	\begin{cases}
	1  & y \in x + \langle 2 \rangle\\
	-1 & \textnormal{otherwise}
	\end{cases}
	\right) = \sum_{x(p-1)} \sum_{\substack{y(p-1) \\ y \in x + \left\langle \frac{p-1}{\gcd(h,p-1)} \right\rangle}} (2 (\textbf{1}_{y \in x + \langle 2 \rangle}) - 1)\\
	&= \sum_{x(p-1)} \left( 2 \left| \left\langle \frac{p-1}{\gcd(h,p-1)} \right\rangle \cap \langle 2 \rangle \right| - \left| \left\langle \frac{p-1}{\gcd(h,p-1)} \right\rangle \right| \right).
	\end{align}
	If \(\frac{p-1}{\gcd(h,p-1)} \in \langle 2 \rangle\), then this is
	\begin{align}
	&= \sum_{x(p-1)} \left( 2 \left| \left\langle \frac{p-1}{\gcd(h,p-1)} \right\rangle \right| - \left| \left\langle \frac{p-1}{\gcd(h,p-1)} \right\rangle \right| \right) = (p-1) \gcd(h,p-1).
	\end{align}
	If \(\frac{p-1}{\gcd(h,p-1)} \not \in \langle 2 \rangle\), then this is
	\begin{align}
	&= \sum_{x(p-1)} \left( 2 \cdot \frac{1}{2} \left| \left\langle \frac{p-1}{\gcd(h,p-1)} \right\rangle \right| - \left| \left\langle \frac{p-1}{\gcd(h,p-1)} \right\rangle \right| \right) = 0.
	\end{align}
	Since \(\frac{p-1}{\gcd(h,p-1)} \in \langle 2 \rangle\) precisely when \(\nu_2(p-1) > \nu_2(h)\), this completes the proof.
\end{proof}

\begin{lemma}
	\label{gcd-lemma}
	Let \(k\), \(n_1\), and \(n_2\) be integers.
	If \(\gcd(k,n_1,n_2) = 1\), then there exists an \(m\) such that \(\gcd(m,n_1) = 1\) and \(m \equiv k \bmod n_2\).
\end{lemma}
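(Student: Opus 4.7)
The plan is to build $m = k + t n_2$ for a carefully chosen integer $t$, so that the congruence condition $m \equiv k \pmod{n_2}$ is automatic and only the coprimality to $n_1$ needs attention. The idea is to handle one prime of $n_1$ at a time and combine via CRT.

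First I would split the primes dividing $n_1$ into two sets:
$$P_1 \;=\; \{p \text{ prime} : p \mid n_1, \ p \mid n_2\}, \qquad P_2 \;=\; \{p \text{ prime} : p \mid n_1, \ p \nmid n_2\}.$$
For any $p \in P_1$, the hypothesis $\gcd(k,n_1,n_2)=1$ forces $p \nmid k$ (since $p \mid \gcd(n_1,n_2)$). Consequently, for any choice of $t$,
$$k + t n_2 \ \equiv\ k \ \not\equiv\ 0 \pmod{p},$$
so the coprimality to such primes is free.

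For each $p \in P_2$, since $p \nmid n_2$, the class $n_2$ is invertible modulo $p$, and the condition $p \nmid k + t n_2$ is equivalent to
$$t \ \not\equiv\ -k\, n_2^{-1} \pmod{p},$$
which excludes exactly one residue class mod $p$. Because $P_2$ is finite, by the Chinese Remainder Theorem I can pick an integer $t$ satisfying $t \not\equiv -k\, n_2^{-1} \pmod{p}$ for every $p \in P_2$ simultaneously (e.g.\ by choosing any admissible residue mod $p$ for each $p$, and lifting).

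With such a $t$, set $m = k + t n_2$. By construction $m \equiv k \pmod{n_2}$, and no prime divisor of $n_1$ divides $m$, so $\gcd(m,n_1)=1$. There is no real obstacle here: the only subtlety is making sure that the primes $p \mid \gcd(n_1,n_2)$ are handled by the hypothesis rather than by a choice of $t$, which is exactly where the assumption $\gcd(k,n_1,n_2)=1$ gets used.
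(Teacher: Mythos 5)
Your proof is correct, but it takes a genuinely different route from the paper's. The paper disposes of the lemma in one line: by Dirichlet's theorem on primes in arithmetic progressions, choose a prime $m \equiv k \bmod n_2$ with $m > n_1$; such an $m$ is automatically coprime to $n_1$. Your argument instead builds $m = k + t n_2$ by hand, splitting the primes of $n_1$ according to whether they divide $n_2$ and choosing $t$ via CRT to dodge the one forbidden residue class at each prime of the second kind. What the paper's approach buys is brevity, at the cost of invoking a deep analytic theorem; what yours buys is that it is completely elementary and, in fact, more careful on a point the paper glosses over. Dirichlet's theorem requires $\gcd(k,n_2)=1$, which does \emph{not} follow from the stated hypothesis $\gcd(k,n_1,n_2)=1$ (e.g.\ $k=2$, $n_2=4$, $n_1=3$): if $d=\gcd(k,n_2)>1$, no integer $\equiv k \bmod n_2$ other than possibly $\pm d$ itself can be prime, so the paper's one-liner needs a small patch (apply Dirichlet after factoring out $d$, noting $\gcd(d,n_1)=1$ by hypothesis). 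Your CRT argument handles this case with no modification, since the primes dividing $\gcd(n_1,n_2)$ are exactly the ones your set $P_1$ absorbs using the hypothesis. The only degenerate case neither proof addresses is $n_1=0$, which is irrelevant to the application (there $n_1=p-1\geq 1$).
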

\begin{proof}
	By Dirichlet's theorem on primes in arithmetic progressions, one can choose a prime $m\equiv k \bmod n_2$ such that $m>n_1$. 
\end{proof}

\section{Jacobians of Hyperelliptic Curves with Positive Rank Over \(\QQ(T)\)} \label{sec-rank}

In this section we will show a number of constructions that yield hyperelliptic curves $\X/\QQ(T)$ such that their jacobians have positive rank.

\subsection{Rank $2g$.} \label{sec-rank2g}

In \cite[Proposition 3.2.]{Na3}, Nagao considers the  elliptic curve 
$$\E/\QQ(T): y^{2}=f(x)+T^{2},$$
where $f(x)\in\ZZ[x]$ is a monic cubic polynomial without double roots, and computed $A_{\E,1}(p)$ for this family. In particular, he showed
\begin{equation}
-A_{\E,1}(p) = \begin{cases} 
2p & \text{if} \ f(x) ~\text{mod}~ p~ \text{factors completely,} \\
-p & \text{if} \  f(x) ~\text{mod}~ p~ \text{is irreducible,} \\
0 & \text{if} \ ~\text{otherwise.} \\
\end{cases}.
\end{equation}
We now present an analogous construction for curves of genus $g$ and compute their corresponding Legendre sums. Let $g\geq 1$ be fixed, and let 
$$\X/\QQ(T): y^2 = f(x) + T^2,$$ 
where $f(x)$ is a monic degree $2g+1$ polynomial without double roots. Note that the discriminant in $t$ of $f(x) + t^2$ is $-4f(x)$, and so if $p > 2$, then the condition that $p \mid -4f(x)$ is equivalent to $p \mid f(x)$. Hence, using Lemma \ref{AMLRlemma}, we have
\begin{align}
\begin{split}
-p\cdot A_{\X,1}(p) &= \sum_{t,x(p)} \legendre{f(x) + t^2}{p} \\
&= \sum_{\substack{x(p) \\ f(x) \equiv 0 \bmod p}} (p-1)\legendre{1}{p} - \sum_{\substack{x(p) \\ f(x) \nequiv 0\bmod p}} \legendre{1}{p} \\
&= \sum_{\substack{x(p) \\ f(x) \equiv 0}} p\legendre{1}{p} - \sum_{\substack{x(p) \\ f(x) \equiv 0}} \legendre{1}{p} - \sum_{\substack{x(p) \\ f(x) \nequiv 0}} \legendre{1}{p} \\
&= \sum_{\substack{x(p) \\ f(x) \equiv 0}} p - \sum_{x(p)} \legendre{1}{p} = p\#\{x \in \ZZ/p\ZZ \mid f(x) \equiv 0 ~\text{mod}~ p\} - p.
\end{split}
\end{align}
Let $\tilde{f}(x)$ denote $f(x) \bmod p$ and write $L_f$ for the number of distinct linear factors in the factorization of $\tilde{f}(x)$. Then
\begin{equation}
-p\cdot A_{\X,1}(p) = (L_f-1)\cdot p,
\end{equation}
and in particular, if $\tilde{f}(x)$ factors completely, then $L_f = 2g+1$ and $-p\cdot A_{\X,1}(p) = 2gp$. If we choose $f(x)$ so that it splits completely over $\QQ$, then we have $-p\cdot A_{\X,1}(p) = 2gp$ for all sufficiently large $p$.  Thus, Lemma \ref{lem-nagaolimit} implies that
$$\lim_{P \to \infty} \frac{1}{P} \sum_{p\leq P} -A_{\X,1}(p)\cdot \log  p = \lim_{P \to \infty} \frac{1}{P} \sum_{p\leq P} 2g\cdot \log  p =2g.$$
Further, if $f(x)$ is a Morse function, then the Galois group of $f(x)+T^2$ over $\QQ(T)$ is $S_n$, by \cite[Theorem 4.4.5]{Se}. Hence, Zarhin's Theorem \ref{thm-zarhin} shows that, under these conditions, $J_\X(\QQ(T))$ is an absolutely simple abelian variety, and therefore the $\QQ(T)/\QQ$-Chow trace of $\X$ is trivial. Thus, Conjecture \ref{conj-nagao-new} implies that $\rank J_\X(\QQ(T)) = 2g.$ 

In this case, Shioda \cite{Sh2} has shown that $J_\X(\QQ(T))$ has rank $2g$ by exhibiting $2g$ independent points. Moreover, Hindry and Pacheco \cite[Exemple 5.6]{HP} show that $\X$ is a rational surface, and therefore Tate's conjecture and Conjecture \ref{conj-HP} hold for this surface, showing again that $\rank J_\X(\QQ(T)) = 2g.$

\begin{example}
	Let $f(x) = (x-1)(x-2)\cdots(x-7)$ and let $\X: y^2 = f(x)+T^2$. Specializing at $T=2$ we obtain the hyperelliptic curve of genus $3$ given by
	$$\X_2 : y^2 = x^7 - 28x^6 + 322x^5 - 1960x^4 + 6769x^3 - 13132x^2 + 13068x - 5036,$$
	which Magma shows to be of rank $2g=6$. In fact, one can check that the points $(P_i)-(\mathcal{O})$ given by $P_i=(i,T)$, for $T=2$, are independent in $J_{\X_2}(\QQ)$. Hence, the same points are independent in $J_\X$ over $\QQ(T)$. Magma also verifies that the Galois group of $f(x)+4$ is $S_7$, and therefore the Galois group of $f(x)+T^2$ over $\QQ(T)$ is $S_7$ as well. Thus, $J_\X(\QQ(T))$ is a simple hyperelliptic jacobian of rank $6$.
	
	If we put $f(x)= (x-1)(x-2)\cdots(x-7)$ and let $\X: y^2 = f(x)+T^2$, then Magma verifies that the specialization at $T=2$ yields a curve of genus $4$ given by
\small 	
	$$y^2 = x^9 - 45x^8 + 870x^7 - 9450x^6 + 63273x^5 - 269325x^4 + 723680x^3 -
	1172700x^2 + 1026576x - 362876$$
	with rank $9$. Note that here $J_\X(\QQ(T))$ is of rank $2g=8$, but the specialization at $T=2$ has higher rank equal to $9$.
\end{example}

\subsection{Rank $2g+1$.}\label{sec-rank2gplus1}

We consider the family \(\X: y^2 = f(x)T + 1\), where $f(x)$ is a polynomial of degree $2g+1$ that splits completely, with no double roots. We compute the first moment of \(\X\) as follows:
\begin{align}
-p\cdot A_{1,\X}
&=
-\sum_{t(p)} \sum_{x(p)} \legendre{f(x)t + 1}{p}\\
&=
-\sum_{\substack{x(p) \\ f(x) \equiv 0\bmod p}} \sum_{t(p)} \legendre{1}{p} - \sum_{\substack{x(p) \\ f(x) \not\equiv 0\bmod p}} \sum_{t(p)} \legendre{f(x)t + 1}{p}\\
&=
-p\sum_{\substack{x(p) \\ f(x) \equiv 0\bmod p}} 1 + 0=-L_f\cdot p,
\end{align}
where we have used Lemma \ref{lem-addstozero}, and $L_f$ is the number of linear factors in the factorization of $\tilde{f}(x)$ modulo $p$, as before. Since $f(x)$ factors completely over $\QQ$, it follows that $L_f=2g+1$ for all sufficiently large $p$. Thus, $-A_{\X,1}(p)=2g+1$, and Lemma \ref{lem-nagaolimit} implies that
$$\lim_{P \to \infty} \frac{1}{P} \sum_{p\leq P} -A_{\X,1}(p)\cdot \log  p =2g+1.$$
Thus, if we assume that $f(x)$ is chosen so that the $\QQ(T)/\QQ$-Chow trace of $\X$ is trivial, then Conjecture \ref{conj-nagao-new} implies that the rank of $J_\X(\QQ(T))$ is $2g+1$.

\begin{example}
	Let $f(x) = (x-1)(x-2)\cdots(x-7)$ and let $\X: y^2 = f(x)T+1$. Specializing at $T=2$ we obtain the hyperelliptic curve of genus $3$ given by
	$$\X_2 : y^2 =2x^7 - 56x^6 + 644x^5 - 3920x^4 + 13538x^3 - 26264x^2 + 26136x - 10079,$$
	which Magma shows to be of rank $2g+1=7$. In fact, one can check that the points $(P_i)-(\mathcal{O})$ given by $P_i=(i,1)$, for $T=2$, are independent in $J_{\X_2}(\QQ)$. Hence, the same points are independent in $J_\X$ over $\QQ(T)$. Magma also verifies that the Galois group of $2f(x)+1$ is $S_7$, and therefore the Galois group of $f(x)T+1$ over $\QQ(T)$ is $S_7$ as well. Thus, $J_\X(\QQ(T))$ is a simple hyperelliptic jacobian of rank $7$.
\end{example}

\subsection{Rank $4g+2$.}\label{sec-rank4gplus2}

Consider the following genus \(g\) curve:
\begin{align}
\X: y^2 = f(x,T)
&= x^{2g+1} T^2 + 2q(x)T - h(x)
\end{align}
where $q(x)$ and $h(x)$ are polynomials in $\Z[x]$ of degree $2g+1$ to be chosen at a later time. We will imitate the construction in \cite{ALM} for $g=1$, and choose \(q(x)\) and \(h(x)\) such that \(\text{rank}(\X) = 4g+2\). In order to do so, we define $D_T(x)$ as a fourth of the discriminant of $f(x,T)$ as a polynomial in the variable $T$.


\begin{lemma}\label{lem-momentbigrank}
	Suppose that \(D_T(x)\) has distinct integer roots \(r_i = \rho_i^2\) for \(1 \leq i \leq 4g+2\).
	Then, the first moment of the surface $\X$ satisfies $-p\cdot A_{\X,1}(p) = (4g+2)p$, for all sufficiently large $p$.
\end{lemma}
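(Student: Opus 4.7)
The plan is to unwind the first moment as the double Legendre-symbol sum
\begin{equation*}
-p\cdot A_{\X,1}(p) \;=\; \sum_{x,t\,(p)} \legendre{x^{2g+1}t^2 + 2q(x)t - h(x)}{p}
\end{equation*}
and reduce the inner sum over $t$ via Lemma \ref{AMLRlemma}. A direct computation shows that, viewed as a quadratic in $T$, the polynomial $f(x,T)$ has $T$-discriminant $4(q(x)^2 + x^{2g+1}h(x)) = 4D_T(x)$. Thus Lemma \ref{AMLRlemma} applies with $a = x^{2g+1}$, $b = 2q(x)$, $c = -h(x)$, and the condition $p \mid b^2 - 4ac$ becomes exactly $p \mid D_T(x)$.

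\medskip

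First I would isolate the contribution from $x \equiv 0 \bmod p$. By the distinct-roots hypothesis (interpreting $\rho_i \neq 0$ so that $0$ is not a root of $D_T$), we have $D_T(0) = q(0)^2 \neq 0$, so for large $p$ the polynomial $f(0,T) = 2q(0)T - h(0)$ is genuinely linear in $T$, and its $t$-sum vanishes by Lemma \ref{lem-addstozero}. For $x \not\equiv 0 \bmod p$ the leading coefficient $x^{2g+1}$ is invertible, and Lemma \ref{AMLRlemma} yields $(p-1)\legendre{x^{2g+1}}{p}$ when $p \mid D_T(x)$ and $-\legendre{x^{2g+1}}{p}$ otherwise. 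Summing over $x$ and using $\legendre{x^{2g+1}}{p} = \legendre{x}{p}$ (valid because $2g+1$ is odd), the two cases consolidate into
\begin{equation*}
-p\cdot A_{\X,1}(p) \;=\; p\sum_{\substack{x(p),\, x\neq 0 \\ p\,\mid\, D_T(x)}} \legendre{x}{p} \;-\; \sum_{x\neq 0\,(p)} \legendre{x}{p}.
\end{equation*}

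\medskip

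The second sum vanishes because $\legendre{\cdot}{p}$ is the non-trivial quadratic character on $\Fp^\times$. For the first sum, the key input is the hypothesis: $D_T(x)$ has exactly $4g+2$ distinct integer roots $r_i = \rho_i^2$, so $\deg D_T = 4g+2$, and for $p$ large enough the roots of $D_T(x) \bmod p$ are precisely the residues of $r_1,\ldots,r_{4g+2}$, still distinct and still nonzero modulo $p$. Each such root contributes $\legendre{r_i}{p} = \legendre{\rho_i^2}{p} = 1$, so the first sum equals $4g+2$, giving $-p\cdot A_{\X,1}(p) = (4g+2)p$.

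\medskip

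The main obstacle, such as it is, is purely quantitative bookkeeping of which primes qualify as ``sufficiently large'': one needs $p$ to exceed the leading coefficient of $D_T(x)$ (to avoid a degree drop modulo $p$), every pairwise difference $r_i - r_j$ (so roots do not collide modulo $p$), and every $|\rho_i|$ (so no $r_i$ reduces to $0$ modulo $p$, which would collapse $\legendre{r_i}{p}$ from $1$ to $0$). Since only finitely many primes violate any of these conditions, the identity holds for all $p \gg 0$, as claimed.
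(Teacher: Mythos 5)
Your proof is correct and follows essentially the same route as the paper's: apply Lemma \ref{AMLRlemma} to the inner sum over $t$ (with discriminant $4D_T(x)$), use $\legendre{x^{2g+1}}{p}=\legendre{x}{p}$ and the vanishing of $\sum_x\legendre{x}{p}$, and then count the $4g+2$ roots $r_i=\rho_i^2$, each contributing $\legendre{\rho_i^2}{p}=1$ for large $p$. Your extra care with the $x\equiv 0$ term and the explicit list of excluded primes is a welcome refinement but not a different argument.
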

\begin{proof}
	We use Lemma \ref{AMLRlemma} to compute 
	\begin{align}
	-p\cdot A_{\X,1}(p)
	&=
	\sum_{t(p)} \sum_{x(p)} \legendre{f(x,t)}{p} =
	\sum_{t(p)} \sum_{x(p)} \legendre{x^{2g+1} T^2 + 2q(x)T - h(x)}{p}\\
	&=
	\sum_{\substack{x(p) \\ D_t(x) \equiv 0\bmod p}}(p-1) \legendre{x^{2g+1}}{p} - \sum_{\substack{x(p) \\ D_t(x) \not \equiv 0\bmod p}}\legendre{x^{2g+1}}{p}\\
	&=
	\sum_{\substack{x(p) \\  D_t(x) \equiv 0\bmod p}}p \legendre{x}{p} - \sum_{\substack{x(p)}} \legendre{x}{p} =
	p\cdot\left( \sum_{\substack{x(p) \\  D_t(x) \equiv 0\bmod p}} \legendre{x}{p} \right). 
	\end{align}
	Since $D_T(x)$ has $4g+2$ distinct integer roots, for large enough $p$ these will also be distinct modulo $p$, and therefore $-p\cdot A_{\X,1}(p) = (4g+2)p$, as claimed.
\end{proof}

\begin{lemma}\label{lem-choosehq}
	For any choice of distinct integers $\rho_1,\ldots,\rho_{4g+2}$, there exists polynomials  
	$$q(x)   =  x^{2g+1} + \sum_{i=0}^{2g} a_i x^i, \text{ and } h(x)   = (A-1)x^{2g+1} + \sum_{i=0}^{2g} A_ix^i,$$
	in $\mathbb{Z}[x]$ so that $D_T(x)=q(x)^2 + x^{2g+1}h(x)$ has $4g+2$ distinct roots $r_i = \rho_i^2$.
\end{lemma}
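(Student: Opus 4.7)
The plan is to reformulate the goal as the identity $D_T(x) = A \cdot P_\rho(x)$, where $P_\rho(x) := \prod_{i=1}^{4g+2}(x-\rho_i^2)$. Since $q(x)^2$ is monic of degree $4g+2$ and $x^{2g+1} h(x)$ has degree $4g+2$ with leading coefficient $A - 1$, the degree-$(4g+2)$ coefficient match on both sides is automatic. Writing $q(x) = x^{2g+1} + \tilde q(x)$ with $\deg \tilde q \le 2g$, the contribution of $q(x)^2$ modulo $x^{2g+1}$ is simply $\tilde q(x)^2$, while $x^{2g+1} h(x)$ vanishes modulo $x^{2g+1}$. Hence matching coefficients of degrees $0, \ldots, 2g$ yields the single constraint $\tilde q(x)^2 \equiv A \cdot P_\rho(x) \pmod{x^{2g+1}}$, while the coefficients of degrees $2g+1, \ldots, 4g+1$ can be arranged simply by \emph{defining} $h(x) := (A \cdot P_\rho(x) - q(x)^2)/x^{2g+1}$ and reading off the $A_i$.

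Thus the core problem is to produce $A \in \Z_{>0}$ and $\tilde q \in \Z[x]$ of degree $\le 2g$ with $\tilde q(x)^2 \equiv A \cdot P_\rho(x) \pmod{x^{2g+1}}$. Assuming the $\rho_i$ are nonzero with pairwise distinct absolute values (so that the $\rho_i^2$ are themselves distinct and nonzero, as needed for the conclusion), the constant term $P_\rho(0) = R^2$ with $R := \prod_i \rho_i \ne 0$ is a nonzero rational square. Working first over $\QQ$, I would construct a square root of $P_\rho(x)$ modulo $x^{2g+1}$ by a Hensel-style lifting: start from $S_0 := R$, and given $S_k \in \QQ[x]$ of degree $\le k$ with $S_k^2 \equiv P_\rho(x) \pmod{x^{k+1}}$, solve for $b_{k+1} \in \QQ$ such that $S_{k+1} := S_k + b_{k+1} x^{k+1}$ satisfies $S_{k+1}^2 \equiv P_\rho(x) \pmod{x^{k+2}}$. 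The defining equation is linear in $b_{k+1}$ with leading coefficient $2R \ne 0$, hence uniquely solvable; iterating $2g$ times yields $S(x) \in \QQ[x]$ of degree $\le 2g$ with $S(x)^2 \equiv P_\rho(x) \pmod{x^{2g+1}}$.

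To clear denominators, let $N$ be a positive integer such that $N \cdot S(x) \in \Z[x]$, and set $A := N^2$, $\tilde q(x) := N \cdot S(x)$, $q(x) := x^{2g+1} + \tilde q(x)$, and $h(x) := (A \cdot P_\rho(x) - q(x)^2)/x^{2g+1}$. Then $\tilde q(x)^2 = N^2 S(x)^2 \equiv N^2 \cdot P_\rho(x) = A \cdot P_\rho(x) \pmod{x^{2g+1}}$, so the numerator defining $h$ has integer coefficients and is divisible by $x^{2g+1}$; hence $h \in \Z[x]$, has degree $\le 2g+1$, and its leading coefficient is forced to be $A - 1$ by the automatic degree-$(4g+2)$ match. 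By construction $D_T(x) = q(x)^2 + x^{2g+1} h(x) = A \cdot P_\rho(x)$, whose roots are precisely the $4g+2$ distinct integers $\rho_i^2$. The main obstacle is arranging integrality of both $q$ and $h$ simultaneously; the rescaling $A = N^2$ resolves this cleanly by absorbing all denominators at once, and explains why $A$ appears in the statement as an independent parameter rather than being fixed in advance.
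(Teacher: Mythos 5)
Your argument is correct and is essentially the paper's proof in different packaging: both split the identity $q(x)^2+x^{2g+1}h(x)=A\prod_i(x-\rho_i^2)$ into the top coefficients, which are absorbed by the free coefficients of $h$, and the bottom $2g+1$ coefficients, which form a triangular system for $a_0,\dots,a_{2g}$ solved from the constant term upward (your Hensel lift is exactly the paper's recursion \eqref{eq88}, with pivot $2a_0=2\prod_i\rho_i$), followed by a rescaling to clear denominators. The only substantive differences are in your favor: you make explicit the nondegeneracy hypothesis ($\rho_i\neq 0$ and the $|\rho_i|$ pairwise distinct) that the paper uses implicitly when it divides by $e=2R_0$, and your choice $A=N^2$ keeps $q$ monic after clearing denominators, whereas the paper's substitution $q\mapsto Lq$, $h\mapsto L^2h$ does not preserve the normalization stated in the lemma.
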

\begin{proof}
	We want to choose \(a_i\) and \(A_i\) such that
	\begin{align*}
	D_T(x) &= q(x)^2 + x^{2g+1}h(x) \\
	&= Ax^{4g+2} + \left(A_{2g} + 2a_{2g}\right)x^{4g+1} + \left(A_{2g-1} + a_{2g}^2 + 2a_{2g-1}\right)x^{4g} + \dots + (2a_0a_1)x + a_0^2 \\
	&= A\left(x^{4g+2} + R_{4g+1}x^{4g+1} + R_{4g}x^{4g} + \dots + R_{1}x + R_{0}\right) \\
	&= A(x-\rho_1^2)(x-\rho_2^2)\cdots(x-\rho_{4g+1}^2)(x-\rho_{4g+2}^2).
	\end{align*}
	The only nontrivial equality here is between the second and third lines.
	We choose \(a_i\) and \(A_i\) by equating coefficients between those two lines.
	Thus, we have \(4g+1\) equations in \(4g+3\) variables.
	In particular, the equation of the \(k\)th coefficient is the following (where \((\cdot)_k\) denotes the coefficient of \(x^k\)):
	\begin{equation*}
	R_kA = (D_T(x))_k,
	\end{equation*}
	for \(0 \leq k \leq 4g+2\). Note that $R_{4g+2}=1$. Furthermore, we have
	\begin{align*}
	(D_T(x))_k
	&= (q(x)^2)_k + (h(x)x^{2g+1})_k\\
	&= (h(x)x^{2g+1})_k + \sum_{\substack{i+j=k \\ 0 \leq i,j \leq k}} (q(x))_i (q(x))_j  \\
	&= A_{k-(2g+1)} + \sum_{\substack{i+j=k \\ 0 \leq i,j \leq k}} a_ia_j,
	\end{align*}
	where \(A_i = 0\) by convention if \(i < 0\).
	Therefore, it suffices to solve for the variable $R_kA$ via
	\begin{equation}
	\label{eq88}
	R_kA = A_{k-(2g+1)} + \sum_{\substack{i+j=k \\ 0 \leq i,j \leq k}} a_ia_j  
	\end{equation}
	for \(0 \leq k \leq 4g+2\). For consistency, we require $A_{2g+1} = A$ and $a_{2g+1} = 1$. Recall that $R_{k}$ are fixed for $0\leq k \leq 4g+2$. First of all, for $2g+1 \leq k \leq 4g+2$, we can choose the adequate coefficients of $h(x)$ to satisfy \eqref{eq88}. Thus, if we list these equations for $0\leq k \leq 2g$ as \(k\) increases, we should determine the coefficients of $q(x)$. In the \(k = 0\) case, the equation is \(R_0A = a_0^2\).
	Thus, if we allow rational solutions and choose \(A\) such that \(R_0A\) is a square in \(\Z\), then these equations have solutions.
	
	This yields \(h,q \in \QQ[x]\) such that the roots of \(D_T(x)\) meet the stated condition.
	However, we require \(h,q \in \Z[x]\).
	To move \(h\) and \(g\) into \(\Z[x]\) without changing the roots of \(D_T(x) = q(x)^2+h(x)x^{2g+1}\), we replace \(q(x)\) with \(L\cdot q(x)\) and \(h(x)\) with \(L^2\cdot h(x)\) where \(L\) is the least common multiple of the denominators of the coefficients of \(h\) and \(q\).
\end{proof}

We are now ready to restate and prove our main Theorem \ref{main1}.

\begin{theorem}
	\label{thm-main1proof}
	Let $g\geq 1$ be fixed, and let $\rho_1,\ldots,\rho_{4g+2}$ be distinct integers. Let $h(x)$ and $q(x)$ be chosen as in Lemma \ref{lem-choosehq}, and let 
	$$\X: y^2 = x^{2g+1} T^2 + 2q(x)T - h(x).$$
	Assume that the jacobian of $\mathcal X$ over $\QQ(T)$ has no subvariety defined over $\QQ$ in its factorization. Then, Conjecture \ref{conj-nagao-new} implies that the jacobian of $\X$ has rank $4g+2$ over $\mathbb{Q}(T)$.
\end{theorem}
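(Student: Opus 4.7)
The plan is to assemble this theorem as a corollary of the two preceding lemmas together with Nagao's convergence result. The heavy lifting has already been done: Lemma \ref{lem-choosehq} guarantees the existence of polynomials $h,q \in \ZZ[x]$ for which the curve $\X$ is well-defined with $D_T(x)$ splitting as $A\prod_i(x-\rho_i^2)$, and Lemma \ref{lem-momentbigrank} then computes the first moment exactly.

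First I would invoke Lemma \ref{lem-momentbigrank} applied to the discriminant polynomial $D_T(x)$ whose $4g+2$ distinct roots are the squares $r_i=\rho_i^2$. For all primes $p$ larger than some bound $P_0$ (depending only on the $\rho_i$ and on the leading coefficient $A$), the roots $r_1,\dots,r_{4g+2}$ remain distinct modulo $p$ and are all quadratic residues modulo $p$ (since each $r_i$ is already a square in $\ZZ$, so $\legendre{r_i}{p}=1$ when $p$ does not divide $r_i$). Hence the sum appearing at the end of the proof of Lemma \ref{lem-momentbigrank} equals $4g+2$, and so
\begin{equation*}
-A_{\X,1}(p)\;=\;4g+2 \qquad \text{for all } p>P_0.
\end{equation*}

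Next I would apply Lemma \ref{lem-nagaolimit} to the sequence $c_p := -A_{\X,1}(p)$. This sequence is bounded (in fact eventually constant equal to $4g+2$, and at the finitely many exceptional primes $p \leq P_0$ we may harmlessly redefine $c_p$ to equal $4g+2$, which only alters each averaged sum by $o(1)$). The average $\frac{1}{\pi(N)}\sum_{p\leq N}c_p$ converges to $4g+2$, so Lemma \ref{lem-nagaolimit} yields
\begin{equation*}
\lim_{P\to\infty}\frac{1}{P}\sum_{p\leq P}-A_{\X,1}(p)\,\log p \;=\; 4g+2.
\end{equation*}

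Finally, since by hypothesis the jacobian $J_\X/\QQ(T)$ has trivial Chow $\QQ(T)/\QQ$-trace (it has no subvariety defined over $\QQ$), Conjecture \ref{conj-nagao-new} is applicable, and its left-hand side is precisely the limit just computed. Therefore the conjecture implies $\rank J_\X(\QQ(T)) = 4g+2$, completing the proof. There is no serious obstacle here: the only subtlety worth flagging is the handling of the finitely many small or bad primes at which the explicit formula $-A_{\X,1}(p)=4g+2$ may fail (primes dividing $A$, dividing some $\rho_i$, or where the $\rho_i^2$ collide), but these contribute a bounded total to $\sum_{p\leq P}A_{\X,1}(p)\log p$ and hence vanish after division by $P$.
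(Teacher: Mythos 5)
Your proof is correct and follows essentially the same route as the paper: invoke Lemma \ref{lem-choosehq} to produce $q$ and $h$ with $D_T(x)=A\prod_i(x-\rho_i^2)$, apply Lemma \ref{lem-momentbigrank} to get $-A_{\X,1}(p)=4g+2$ for large $p$, pass to the logarithmically weighted limit via Lemma \ref{lem-nagaolimit}, and conclude from the trivial Chow trace and Conjecture \ref{conj-nagao-new}. Your added remarks on why the roots being perfect squares forces $\legendre{r_i}{p}=1$ and on discarding the finitely many bad primes are details the paper leaves implicit, but they do not change the argument.
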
  

\begin{proof}
	Let $g\geq 1$ be fixed, and let $\rho_1,\ldots,\rho_{4g+2}$ be distinct integers. By Lemma \ref{lem-choosehq}, we can find polynomials $h(x)$ and $q(x)$ such that $D_T(x)=(q(x))^2+x^{2g+1}h(x)=A(x-\rho_1^2)\cdots(x-\rho_{4g+2}^2)$. If we define $\X$ as in the statement, then Lemma \ref{lem-momentbigrank} shows that $-p\cdot A_{\X,1}(p) = (4g+2)p$ for all sufficiently large primes $p$. Therefore, Lemma \ref{lem-nagaolimit} shows that 
$$	\lim_{P \to \infty} \frac{1}{P} \sum_{p \leq P} -A_{\X,1}(p) \log p\ = 4g+2.$$
Moreover, if we assume that $J_\X$ has no subvariety defined over $\QQ$, then its Chow trace must be trivial. Thus, Conjecture \ref{conj-nagao-new} implies that the rank of $J_\X(\QQ(T))$ is $4g+2$, as claimed.
\end{proof}

\begin{remark} If desired, we can change variables so that $\X$ is given in the form $y^2=F(x,T)$ with $F(x,T)$ monic in the variable $x$. Indeed, we replace $y^2=f(x,T)$ by the form $y^2=F(x,T)$ with a monic polynomial $F(x,T)$ by changing the basis
	$$y \rightarrow \frac{y}{(T^2+2T-A+1)^2},~~~x\rightarrow \frac{x}{T^2+2T-A+1}.$$
	Then we have
	\small 
	\begin{align*}
	\begin{split}
	y^2 &=f(x,T)\\
	&= x^5T^2 + 2(x^5+ax^4+bx^3+cx^2+dx+e)T-(A-1)x^5-Bx^4-Cx^3-Dx^2-Ex-F\\
	&= (T^2+2T-A+1)x^5+(2aT-B)x^4+(2bT-C)x^3+(2cT-D)x^2+(2dT-E)x+(2eT-F)\\
	y^2 &=F(x,T)\\
	&= x^5+(2aT-B)x^4+(2bT-C)(T^2+2T-A+1)x^3+(2cT-D)(T^2+2T-A+1)^2 x^2\\
	&~~~~+ (2dT-E)(T^2+2T-A+1)^3 x+(2eT-F)(T^2+2T-A+1)^4.
	\end{split}
	\end{align*}
	\normalsize
\end{remark}

\subsection{An example in genus $2$ and rank $10$.}\label{sec-ex}

In this section, we follow the recipe of Theorem \ref{thm-main1proof} to construct a hyperelliptic curve $\X/\QQ(T)$ with jacobian of rank $10$, which is how we found the curve in Example \ref{ex-intro} of the introduction. Let $\rho_1,\ldots,\rho_{10}$ be distinct integers.  We need to find polynomials 
\begin{align*}
q(x) = x^5+ax^4+bx^3+cx^2+dx+e \text{ and } h(x) = (A-1)x^5+Bx^4+Cx^3+Dx^2+Ex+F,
\end{align*}
such that 
\begin{align}
\label{simul}
\begin{split}
D(x) & =q(x)^2+x^5h(x) \\
&=Ax^{10}+(B+2a)x^9+(C+a^2+2b)x^8+(D+2ab+2c)x^7\\
&~~~ +(E+2d+2ac+b^2)x^6+(F+2e+2ad+2bc)x^5\\
&~~~+(2ae+2bd+c^2)x^4+(2be+2cd)x^3+(2ce+d^2)x^2+(2de)x+e^2.\\
&=A(x^{10}+R_{9}x^9+R_{8}x^8+R_{7}x^{7}+R_{6}x^{6}+R_{5}x^{5}+R_{4}x^{4}+R_{3}x^{3}+R_{2}x^2+R_{1}x+R_{0})\\
&=A(x-\rho_{1}^{2})(x-\rho_{2}^{2})(x-\rho_{3}^{2})(x-\rho_{4}^{2})(x-\rho_{5}^{2})(x-\rho_{6}^{2})(x-\rho_{7}^{2})(x-\rho_{8}^{2})(x-\rho_{9}^{2})(x-\rho_{10}^{2}).
\end{split}
\end{align}

Now we will explicitly describe how to determine coefficients of $q(x)$ and $h(x)$ for given $R_{i}$, $i=0,1,\ldots, 9$. Since we can adjust the integer values of $B,C,D,E, \text{and}~ F$, solving the following simultaneous equations from \eqref{simul} is equivalent to give $q(x),h(x)\in\ZZ[x]$ for any given distinct roots of $D_{T}(x)$.
\begin{align*}
2ae+2bd+c^2 & =R_{4}A,\\
2be+2cd &= R_{3}A,\\
2ce+d^{2} &= R_{2}A,\\
2de &= R_{1}A,\\
e^{2} &= R_{0}A.
\end{align*}
For simplifying the procedure, let $A=2e$. Then, we can find the values for the rest of the constants recursively as follows:
\begin{align}
\label{simul2}
\begin{split}
e & = 2R_0,\ d = R_1,\ c = (2eR_2-d^2)/2e,\ b = (2eR_3-2cd)/2e,\ a = (2eR_4-c^2-2bd)/2e,\\
F&=R_{5}A-2e-2ad-2bc,\ E=R_{6}A-2d-2ac-b^2,\ D=R_{7}A-2ab-2c,\\ C&=R_{8}A-a^2-2b,\ B=R_{9}A-2a,\ A=4R_0=2e.
\end{split}
\end{align}
Note: these coefficients are not in $\Z$ but later it is easy to find an integral model for $\X$.

\begin{example}
	For simplicity, let $\rho_{i}=i$, for $i=1,\ldots, 10$. Then we get
	$$R_{0}=13168189440000,~R_{1}=-20407635072000,~R_{2}=8689315795776,$$
	$$R_{3}=-1593719752240,~R_{4}=151847872396, ~R_{5}=-8261931405,$$
	\begin{equation}
	R_{6}=268880381,~R_{7}=-5293970,~R_{8}=61446,~R_{9}=-385.
	\end{equation}
From Eq. (\ref{simul2}), we obtain the coefficients $a,\ldots,e$, and then $A,\ldots,F$, and we build a hyperelliptic curve:
	\begin{align*} 
	\X : y^2 &=	62476467927496043633049600000000x^5T^2 \\
	&+
	124952935854992087266099200000000x^5T \\
	& -
	3290807860845345873174084414821262950400000000x^5 \\
	& -
	78077124456852074329904550163688002129920000x^4T \\
	& +
	1266882949301025362537844681132821271997870080000x^4 \\
	& -
	123371083167607662332725955346616811520000000x^3T \\
	& +
	24393131657917882942419531475439645795721984020559648121x^3 \\
	& +
	97780947791238642428587970982523699200000000x^2T \\ 
	& +
	77106121667148850964656956255833136751214529427393152000x^2\\
	& -
	2549993916103702826374130630551142400000000000xT \\
	& -
	1078851918243051493072239063454153306319585738833920000x \\
	& +
	3290807860845408349642011910864896000000000000T \\
	& +
	1524014810925296267945145551729277974339657041182720000000.
	\end{align*} 
Then, we can proceed as in Example \ref{ex-intro} to show that $\rank J_\X(\QQ(T))=10$, unconditionally, and therefore verifying Conjecture \ref{conj-nagao-new} in this case.
\end{example}

\section{The Second Moments}\label{sec-second}

In this section we compute the second moments in a family of hyperelliptic curves of the form
$$\X_{n,h,k} : y^2 = x^n + x^h T^k$$ where \(n = 2g+1\), and \(g\) is the genus of \(\X\), and  \(0 \leq k < n\).

We first write a formula for \(A_{\X,2}(p)=\frac{1}{p}\sum_{t(p)} a_{\X_{t}}(p)^2\) in a useful way.
For convenience, let \(c\) be \(2\) if \(h\) is even and \(1\) if \(h\) is odd.
Then we have
\begin{align}
\begin{split}
p\cdot A_{\X_{n,h,k},2}(p)& = \sum_{t(p)} a_{\X_{t}}(p)^2 = \sum_{t(p)} \left(-\sum_{x(p)} \legendre{x^n+x^ht^k}{p} \right)^2 \\
&=
\sum_{t(p)} \sum_{x(p)} \sum_{y(p)} \legendre{x^n + x^h t^k}{p} \legendre{y^n + y^h t^k}{p}\\
&=
\sum_{t = 1}^{p-1} \sum_{x(p)} \sum_{y(p)} \legendre{x^n + x^h t^k}{p} \legendre{y^n + y^h t^k}{p}\\
&=
\sum_{t = 1}^{p-1} \sum_{x(p)} \sum_{y(p)} \legendre{x^hy^h}{p} \legendre{x^{n-h} + t^k}{p} \legendre{y^{n-h} + t^k}{p}\\
&=
\sum_{t = 1}^{p-1} \sum_{x(p)} \sum_{y(p)} \legendre{xy}{p}^c \legendre{x^{n-h} + t^k}{p} \legendre{y^{n-h} + t^k}{p}.
\end{split}
\end{align}
where we have used Lemma \ref{lem-addstozero} to remove $t=0$ from the summations. This formula is effectively casewise on the parity of \(h\): if \(h\) is odd, there is an \(\legendre{xy}{p}\) term, and if \(h\) is even, there is not an \(\legendre{xy}{p}\) term. We first use this formula to prove a \(k\)-periodicity result:
\begin{align}
\begin{split}
p\cdot A_{\X_{n,h,k},2}(p)
&=
\sum_{t = 1}^{p-1} \sum_{x(p)} \sum_{y(p)} \legendre{xy}{p}^c \legendre{x^{n-h} + t^k}{p} \legendre{y^{n-h} + t^k}{p}\\
&=
\sum_{t = 1}^{p-1} \sum_{x(p)} \sum_{y(p)} \legendre{(t^{-1} x)(t^{-1} y)}{p}^c \legendre{t^{h-n} x^{n-h} + t^k}{p} \legendre{t^{h-n} y^{n-h} + t^k}{p}\\
&=
\sum_{t = 1}^{p-1} \sum_{x(p)} \sum_{y(p)} \legendre{xy}{p}^c \legendre{x^{n-h} + t^{k+(n-h)}}{p} \legendre{y^{n-h} + t^{k+(n-h)}}{p}\\
&=
p\cdot A_{\X_{n,h,k+(n-h)},2}(p).
\end{split}
\end{align}

Next, we prove a lemma that greatly simplifies the calculation.
\begin{lemma}
	Suppose \(\gcd(k,n-h,p-1) = 1\).
	Then, \(A_{\X_{n,h,k},2}(p) = A_{\X_{n,h,1},2}(p)\).
\end{lemma}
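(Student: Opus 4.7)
The plan is to combine the $k$-periodicity $A_{\X_{n,h,k},2}(p) = A_{\X_{n,h,k+(n-h)},2}(p)$ just derived with Lemma \ref{gcd-lemma} and a single substitution $u = t^m$.

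First, I would apply Lemma \ref{gcd-lemma} with $n_1 = p-1$ and $n_2 = n-h$; the hypothesis $\gcd(k, n-h, p-1) = 1$ is exactly what is required to produce an integer $m$ satisfying $\gcd(m, p-1) = 1$ and $m \equiv k \pmod{n-h}$. Inspecting the proof of that lemma, $m$ is chosen to be a prime exceeding $n_1 = p-1$, so in particular $m > k$ and we may write $m = k + j(n-h)$ for some positive integer $j$. Iterating the $k$-periodicity $j$ times then gives $A_{\X_{n,h,k},2}(p) = A_{\X_{n,h,m},2}(p)$.

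It remains to show $A_{\X_{n,h,m},2}(p) = A_{\X_{n,h,1},2}(p)$. Setting
\[
G(u) \ := \ \sum_{x(p)} \sum_{y(p)} \legendre{xy}{p}^c \legendre{x^{n-h} + u}{p} \legendre{y^{n-h} + u}{p},
\]
the displayed formula preceding this lemma reads $p \cdot A_{\X_{n,h,m},2}(p) = \sum_{t=1}^{p-1} G(t^m)$. Since $\gcd(m, p-1) = 1$, the map $t \mapsto t^m$ permutes $\Fp^\times$, whence $\sum_{t=1}^{p-1} G(t^m) = \sum_{u=1}^{p-1} G(u) = p \cdot A_{\X_{n,h,1},2}(p)$. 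Combining the two equalities completes the proof.

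I do not anticipate a serious obstacle. The one mildly delicate point is ensuring that the integer $m$ furnished by Lemma \ref{gcd-lemma} is large enough to apply the periodicity in the forward direction; this is automatic since Dirichlet's theorem is used to find $m$ as a prime exceeding $p-1$. Every other step is a direct use of results already established in the section.
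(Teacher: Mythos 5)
Your proposal is correct and follows essentially the same route as the paper: invoke Lemma \ref{gcd-lemma} to produce $m$ with $\gcd(m,p-1)=1$ and $m\equiv k \bmod (n-h)$, use the $(n-h)$-periodicity to pass from exponent $k$ to exponent $m$, and then use the fact that $t\mapsto t^m$ permutes $\mathbb{F}_p^\times$ to replace $t^m$ by $t$ in the sum. Your worry about applying the periodicity ``in the forward direction'' is moot, since the periodicity is an equality and can be iterated in either direction, but your resolution of it is fine; otherwise you have simply written out the paper's argument with slightly more detail.
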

\begin{proof}
	By Lemma \ref{gcd-lemma}, there exists an \(m\) such that \(\gcd(m,p-1) = 1\) and \(m \equiv k \bmod (n-h)\).
	Thus
	\begin{align}
	\begin{split}
	p\cdot A_{\X_{n,h,k},2}(p) 
	&=
	p\cdot A_{\X_{n,h,m},2}(p)\\ 
	&=
	\sum_{t = 1}^{p-1} \sum_{x(p)} \sum_{y(p)} \legendre{xy}{p}^c \legendre{x^{n-h} + t^m}{p} \legendre{y^{n-h} + t^m}{p}\\ 
	&=
	\sum_{t = 1}^{p-1} \sum_{x(p)} \sum_{y(p)} \legendre{xy}{p}^c \legendre{x^{n-h} + t}{p} \legendre{y^{n-h} + t}{p}\\ 
	&=
	p\cdot A_{\X_{n,h,1},2}(p).
	\end{split}
	\end{align}
\end{proof}
Finally, it remains to compute \(A_{\X_{n,h,1},2}(p)\). We now turn our attention to this.
\begin{theorem}
	Suppose \(\gcd(k,n-h,p-1) = 1\).
	Then
	\begin{align}
	p\cdot A_{\X_{n,h,1},2}(p)
	&=
	\begin{cases}
	(\gcd(p-1,n-h) - 1)(p^2-p) & \text{if } h ~ \textnormal{even}\\
	\gcd(n-h,p-1) (p^2-p)      & \text{if } h ~ \textnormal{odd and} ~ \nu_2(p-1) > \nu_2(n-h)\\
	0                          & \textnormal{otherwise}.
	\end{cases}
	\end{align}
\end{theorem}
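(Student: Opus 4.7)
The plan is to continue from the triple-sum identity
\[
p\cdot A_{\X_{n,h,1},2}(p) \;=\; \sum_{t=1}^{p-1}\sum_{x,y(p)}\legendre{xy}{p}^{c}\,\legendre{x^{n-h}+t}{p}\,\legendre{y^{n-h}+t}{p}
\]
established at the start of Section \ref{sec-second}, where $c=2$ if $h$ is even and $c=1$ if $h$ is odd. The key maneuver is to swap the order of summation and evaluate the innermost sum over $t$ using Lemma \ref{AMLRlemma}. Writing $X=x^{n-h}$ and $Y=y^{n-h}$, the expression $(X+t)(Y+t)=t^{2}+(X+Y)t+XY$ is a monic quadratic in $t$ with discriminant $(X-Y)^{2}$, so the lemma gives a sum over $t\in\Fp$ equal to $p-1$ when $X\equiv Y\bmod p$ and $-1$ otherwise. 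Subtracting off the $t=0$ contribution $\legendre{XY}{p}$ and collecting along the dichotomy $X\equiv Y$ produces the three-term decomposition
\[
p\cdot A_{\X_{n,h,1},2}(p) \;=\; p\!\!\sum_{\substack{x,y(p)\\ x^{n-h}\equiv y^{n-h}}}\!\!\legendre{xy}{p}^{c} \;-\; \sum_{x,y(p)}\legendre{xy}{p}^{c} \;-\; \sum_{x,y(p)}\legendre{xy}{p}^{c+(n-h)}.
\]

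I expect the second and third sums to be handled directly. Because $n=2g+1$ is odd and $c$ matches the parity of $h$, the exponent $c+(n-h)$ is always odd, so the third sum collapses to $\bigl(\sum_{x}\legendre{x}{p}\bigr)^{2}=0$; the middle sum factors as $\bigl(\sum_{x}\legendre{x}{p}^{c}\bigr)^{2}$, which vanishes for $h$ odd and is a simple expression in $p$ for $h$ even. The surviving first sum is where the case split of the theorem lives: for $h$ even, the summand reduces to the indicator of $\{x,y\neq 0,\;x^{n-h}\equiv y^{n-h}\}$, whose cardinality is supplied by Lemma \ref{power}; for $h$ odd, the exponent $n-h$ is even and the first sum is precisely the quantity $S_{n-h}(p)$ of Lemma \ref{double_sums}, which evaluates it to $\gcd(n-h,p-1)(p-1)$ or to $0$ according to the dyadic comparison $\nu_{2}(p-1)$ versus $\nu_{2}(n-h)$.

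The main obstacle, and the source of the asymmetric bias in the theorem, is the odd-$h$ case, since the dyadic dichotomy supplied by Lemma \ref{double_sums} is what produces the ``otherwise'' branch of the statement. That dichotomy comes from asking whether the subgroup $\bigl\langle (p-1)/\gcd(n-h,p-1)\bigr\rangle$ of $\Z/(p-1)\Z$ lies inside the index-two subgroup of even elements, and the numerical criterion for this containment is exactly the $\nu_{2}$ inequality. By contrast, the even-$h$ branch is a routine counting argument via Lemma \ref{power}, and the reduction from general $k$ with $\gcd(k,n-h,p-1)=1$ to $k=1$ has already been carried out in the preceding discussion; so once the three-term decomposition above is in hand, assembling the pieces into the three cases of the theorem is all that remains.
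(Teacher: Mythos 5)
Your plan follows the paper's proof essentially step for step: interchange the order of summation, evaluate the inner $t$-sum via Lemma \ref{AMLRlemma} applied to the monic quadratic $(X+t)(Y+t)$ with discriminant $(X-Y)^2$, split along $x^{n-h}\equiv y^{n-h}$, and finish the even case by counting with Lemma \ref{power} and the odd case with Lemma \ref{double_sums}. The only cosmetic difference is that you subtract the $t=0$ term explicitly and observe that the resulting sum $\sum_{x,y}\legendre{xy}{p}^{c+(n-h)}$ vanishes because $c+(n-h)$ is odd, whereas the paper silently re-inserts $t=0$ into the inner sum; these are the same observation.

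There is, however, one place where your (more careful) bookkeeping and the paper's diverge, and you should settle it before ``assembling the pieces.'' In the even case you correctly treat $\legendre{xy}{p}^2$ as the indicator of $x,y\not\equiv 0$, so the surviving first sum is $p\cdot\gcd(p-1,n-h)(p-1)$ (Lemma \ref{power} minus the origin) and the middle sum is $\bigl(\sum_x\legendre{x}{p}^2\bigr)^2=(p-1)^2$. That yields
\begin{equation*}
p\cdot\gcd(p-1,n-h)(p-1)-(p-1)^2=(\gcd(p-1,n-h)-1)(p^2-p)+(p-1),
\end{equation*}
which exceeds the displayed formula by $p-1$. The paper instead substitutes the full solution count $\gcd(p-1,n-h)(p-1)+1$ and uses $\sum_{x,y}\legendre{xy}{p}^2=p^2$, in effect treating $\legendre{0}{p}^2$ as $1$. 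A direct check with $n=5$, $h=2$, $p=7$ (the curve $y^2=x^5+x^2T$) gives $\sum_t a_{\X_t}(p)^2=90=(3-1)(7^2-7)+6$, confirming your version for $h\geq 2$ even. For $h=0$ the displayed formula is the one that matches numerics, but there the reduction $\legendre{x^hy^h}{p}=\legendre{xy}{p}^2$ already fails on the locus $xy\equiv 0$, and the two discrepancies cancel; so you should run $h=0$ separately, keeping the sum over all $x,y$ with no $\legendre{xy}{p}^2$ factor. The odd case is unaffected, since every zero-locus term genuinely vanishes there. In short: same method as the paper, but carry the constant terms carefully --- your arithmetic, not the displayed case formula, is the one to trust for $h\geq 2$ even.
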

\begin{proof} 
We have
\begin{align}
\begin{split}
p\cdot A_{\X_{n,h,1},2}(p)
&=
\sum_{t = 1}^{p-1} \sum_{x(p)} \sum_{y(p)} \legendre{xy}{p}^c \legendre{x^{n-h} + t}{p} \legendre{y^{n-h} + t}{p}\\ 
&=
\sum_{x,y(p)} \legendre{xy}{p}^c \sum_{t(p)} \legendre{x^{n-h} + t}{p} \legendre{y^{n-h} + t}{p}\\ 
&=
\sum_{\substack{x,y (p) \\ x^{n-h} \equiv y^{n-h}\bmod p}} \legendre{xy}{p}^c (p-1)
- \sum_{\substack{x,y (p) \\ x^{n-h} \not \equiv y^{n-h}\bmod p}} \legendre{xy}{p}^c\\
&=
p \sum_{\substack{x,y (p) \\ x^{n-h} \equiv y^{n-h}\bmod p}} \legendre{xy}{p}^c
- \sum_{x,y (p)} \legendre{xy}{p}^c\\
\end{split}
\end{align}

First, suppose that \(c = 2\).
Then, by Lemma \ref{power}, we have
\begin{align}
\begin{split}
p\cdot A_{\X_{n,h,1},2}(p)
&=
p N_p(x^{n-h} \equiv y^{n-h}) - p^2\\
&=
p (\gcd(p-1,n-h)(p-1)+1)) - p^2\\
&=
(\gcd(p-1,n-h) - 1)(p^2-p).
\end{split}
\end{align}

Second, suppose that \(c = 1\).
Then Lemma \ref{double_sums} yields
\begin{align}
\begin{split}
p\cdot A_{\X_{n,h,1},2}(p)
&=
p \sum_{\substack{x,y (p) \\ x^{n-h} \equiv y^{n-h}\bmod p}} \legendre{xy}{p}\\
&=
\begin{cases}
\gcd(n-h,p-1) (p^2-p) & \nu_2(p-1) > \nu_2(n-h)\\
0                   & \textnormal{otherwise},
\end{cases}
\end{split}
\end{align}
as desired.
\end{proof}

\end{document}